\let\comp=\circ
\let\capprod=\cap
\newcommand*\fixitem {\item[]%
\refstepcounter{enumi}\hskip-\leftmargin\labelenumi\hskip\labelsep}
\newif\if@mainmatter \@mainmattertrue
\newtheoremstyle{mytheorem}  
  {3pt}   
  {3pt}   
  {\itshape}  
  {0pt}       
  {\bfseries} 
  {.}         
  {5pt plus 1pt minus 1pt} 
  {}          
\newtheoremstyle{mydefinition}  
  {3pt}   
  {3pt}   
  {\normalfont}  
  {0pt}       
  {\bfseries} 
  {.}         
  {5pt plus 1pt minus 1pt} 
  {}          
\renewenvironment{proof}[1][\proofname]{\par\noindent
  \normalfont
  \topsep6\p@\@plus6\p@ \trivlist
  \item[\hskip\labelsep\bfseries
    #1\@addpunct{.}]\ignorespaces
}{%
  \qed\endtrivlist
}
\def\@setauthors{%
  \begingroup
  \def\thanks{\protect\thanks@warning}%
  \trivlist
  \centering\normalsize \@topsep30\p@\relax
  \advance\@topsep by -\baselineskip
  \item\relax
  \author@andify\authors
  \def\\{\protect\linebreak}%
  \authors%
  \ifx\@empty\contribs
  \else
    ,\penalty-3 \space \@setcontribs
    \@closetoccontribs
  \fi
  \endtrivlist
  \endgroup
}
\def\@settitle{\begin{center}%
  \baselineskip14\p@\relax
    {\large \bfseries\mathversion{bold}
  \@title}
  \end{center}%
}
\normalfont\fontsize{12}{15}\bfseries}{\S\thesection.}{1mm}{}
\normalfont\fontsize{12}{15}\bfseries}{\thesubsection.}{1mm}{}
\normalfont\fontsize{12}{15}\bfseries}{\thesubsubsection.}{1mm}{}
\renewenvironment{abstract}{%
  \ifx\maketitle\relax
    \ClassWarning{\@classname}{Abstract should precede
      \protect\maketitle\space in AMS document classes; reported}%
  \fi
  \global\setbox\abstractbox=\vtop \bgroup
    \normalfont\Small
    \list{}{\labelwidth\z@
      \leftmargin3pc \rightmargin\leftmargin
      \listparindent\normalparindent \itemindent\z@
      \parsep\z@ \@plus\p@
      
    }%
    \item[\hskip\labelsep\upshape\bfseries\abstractname.]%
}{%
  \endlist\egroup
  \ifx\@setabstract\relax \@setabstracta \fi
}
\newcommand\ackname{Acknowledgements}
\newenvironment{acknowledgements}{%
      \titlepage
      \null\vfil
      \@beginparpenalty\@lowpenalty
      \begin{center}%
        \bfseries \ackname
        \@endparpenalty\@M
      \end{center}}%
     {\par\vfil\null\endtitlepage}
    \def\@citex[#1]#2{\leavevmode
      \let\@citea\@empty
      \@cite{\bfseries\@for\@citeb:=#2\do
        {\@citea\def\@citea{,\penalty\@m\ }%
         \edef\@citeb{\expandafter\@firstofone\@citeb\@empty}%
         \if@filesw\immediate\write\@auxout{\string\citation{\@citeb}}\fi
         \@ifundefined{b@\@citeb}{\hbox{\reset@font\bfseries ?}%
           \G@refundefinedtrue
           \@latex@warning
             {Citation `\@citeb' on page \thepage \space undefined}}%
           {\@cite@ofmt{\csname b@\@citeb\endcsname}}}}{{\rm#1}}}
\def\@setauthors{%
  \begingroup
  \def\thanks{\protect\thanks@warning}%
  \trivlist
  \centering\normalsize \@topsep30\p@\relax
  \advance\@topsep by -\baselineskip
  \item\relax
  \author@andify\authors
  \def\\{\protect\linebreak}%
  \authors%
  \ifx\@empty\contribs
  \else
    ,\penalty-3 \space \@setcontribs
    \@closetoccontribs
  \fi
  \endtrivlist
  \endgroup
}
\def\@settitle{\begin{center}%
  \baselineskip14\p@\relax
    {\large \bfseries\mathversion{bold}
  \@title}
  \end{center}%
}
\newcommand{\Ker}{\operatorname{Ker}} 
\newcommand{\ad}{\operatorname{ad}} 
\newcommand{\rea}{\operatorname{Re}} 
\newcommand{\sgn}{\operatorname{sgn}} 
\newcommand{\pr}{\operatorname{pr}} 
\newcommand{\tr}{\operatorname{tr}} 
\newcommand{\ima}{\operatorname{Im}} 
\newcommand{\Cl}{\operatorname{Cl}} 
\newcommand{\ric}{\operatorname{ric}} 
\newcommand{\Ric}{\operatorname{Ric}} 
\newcommand{\scc}{\operatorname{sc}} 
\newcommand{\End}{\operatorname{End}} 
\newcommand{\scalar}[2]{\langle{#1}\mid#2\rangle} 
\newcommand{\Iso}{\operatorname{Iso}} 
\newcommand{\aut}{\operatorname{aut}} 
\newcommand{\Aut}{\operatorname{Aut}} 
\newcommand{\Spl}{\operatorname{spl}}
\newcommand{\Der}{\operatorname{Der}}
\newcommand{\transpose}[1]{\,{}^t\!#1}
\newcommand{\inph}[2]{\langle#1\,|\,#2\rangle}
\newcommand{\innerproduct}{\inph{\cdot}{\cdot}}
\newcommand{\gla}[1]{\mathfrak#1=\bigoplus\limits_{p\in\mathbb Z}\mathfrak#1_p} 
\newcommand{\diag}{\operatorname{diag}}  
\newcommand{\pcgla}[2][m]{\mathfrak #2(\mathfrak #1)=
\bigoplus\limits_{p\in\mathbb Z}\mathfrak #2(\mathfrak #1)_p}
\renewcommand{\labelenumi}{\textup{(\arabic{enumi})}\,} 
\renewcommand{\labelenumii}{(\roman{enumii})\,} 
\newcommand{\subsubsubsection}{\@startsection{paragraph}{4}{\z@}%
    {1.0\Cvs \@plus.5\Cdp \@minus.2\Cdp}%
    {.1\Cvs \@plus.3\Cdp}%
    {\reset@font\rmfamily\normalsize\bfseries}
  }
\theoremstyle{mytheorem}
\newtheorem{theorem}{Theorem}[section]
\newtheorem{lemma}{Lemma}[section]
\newtheorem{proposition}{Proposition}[section]
\theoremstyle{mydefinition} 
\newtheorem{remark}{Remark}[section] 
\newtheorem{example}{Example}[section] 
\newtheorem{corollary}{Corollary}[section] 
\title{
On the prolongation of a conformal pseudo-subriemannian fundamental graded Lie algebra associated with a pseudo-H-type Lie algebra}
\author{Tomoaki Yatsui}
\address[Tomoaki Yatsui]{Masakae 1-9-2, Otaru, 047-0003, Japan}
\email{yatsui.tomoaki@gmail.com}
\begin{document}
\maketitle

\begin{abstract}
A pseudo H-type Lie algebra naturally gives rise to a conformal 
pseudo-subriemannian fundamental graded Lie algebras. 
In this paper we investigate the prolongations of the associated fundamental graded Lie algebra and the associated conformal pseudo-subriemannian fundamental graded Lie algebra. 
In particular, we show that the prolongation of the associated conformal 
pseudo-subriemannian fundamental graded Lie algebra coincides with that of the associated fundamental graded Lie algebra under some assumptions. 
\end{abstract}
\section{Introduction}
In \cite{Kap80:1} A.~Kaplan introduced H-type Lie algebras, 
which belong to an important class of metric metabelian Lie algebras. 
Furthermore a large class of \textit{pseudo H-type Lie algebras}, which are obtained by replacing 
the inner product to a scalar product, appeared in P.~Citatti \cite{Cia00:1}. 
We will give the precise definition of a pseudo H-type Lie algebra below.  

Let $\mathfrak n$ be a finite dimensional 2-step nilpotent real Lie algebras, 
that is, 
$\mathfrak n$ is a finite dimensional real Lie algebra satisfying 
$[\mathfrak n,\mathfrak n]\ne0$ and 
$[\mathfrak n,[\mathfrak n,\mathfrak n]]=0$. 
Let $\innerproduct$ be a scalar product on $\mathfrak n$ 
such that the center $\mathfrak n_{-2}$ of $\mathfrak n$ is a 
nondegenerate subspace of $(\mathfrak n,\innerproduct)$. 
Here a scalar product on $\mathfrak n$ means a nondegenerate symmetric 
bilinear form on $\mathfrak n$. 
Let $\mathfrak n_{-1}$ be the orthogonal complement of $\mathfrak n_{-2}$ with respect to $\innerproduct$. 
The pair $(\mathfrak n,\innerproduct)$ is called a 
\textit{pseudo H-type Lie algebra}  if for any $z\in \mathfrak n_{-2}$ the endomorphism $J_z$ of $\mathfrak n_{-1}$ defined by 
$\inph{J_z(x)}{y}=\inph{z}{[x,y]}$ $(x,y\in\mathfrak n_{-1})$ 
satisfies the Clifford condition $J_z^2=-\inph{z}{z}1_{\mathfrak n_{-1}}$, 
where $1_{\mathfrak n_{-1}}$ is the identity transformation of 
$\mathfrak n_{-1}$. 
In particular, if $\innerproduct$ is positive definite, then 
$(\mathfrak n,\innerproduct)$ is simply called an H-type Lie algebra. 

Let $(\mathfrak n,\innerproduct)$ be a pseudo H-type Lie algebra. 
Then $\mathfrak n=\mathfrak n_{-2}\oplus \mathfrak n_{-1}$ becomes a 
nondegenerate fundamental graded Lie algebra of the second kind, 
which is called associated with  $(\mathfrak n,\innerproduct)$. 

Now we explain the notion of a fundamental graded Lie algebra and its prolongation briefly. 
A finite dimensional graded Lie algebra (GLA) 
$\mathfrak m=\bigoplus\limits_{p<0}\mathfrak g_p$ is called a 
\textit{fundamental graded Lie algebra} (FGLA) of 
the $\mu$-th kind if the following conditions hold: 
(i) $\mathfrak g_{-1}\ne0$, 
and $\mathfrak m$ is generated by $\mathfrak g_{-1}$; 
(ii) $\mathfrak g_{-\mu}\ne0$ and $\mathfrak g_p=0$ for all $p<-\mu$, 
where $\mu$ is a positive integer. 
Furthermore an FGLA $\mathfrak m=\bigoplus\limits_{p<0}\mathfrak g_p$ is 
called nondegenerate if for $x\in\mathfrak g_{-1}$, $[x,\mathfrak g_{-1}]=0$ 
implies $x=0$. 
For a given FGLA $\mathfrak m=\bigoplus\limits_{p<0}\mathfrak g_p$ 
there exists a GLA 
$\pcgla g$ satisfying the following three conditions (P1)--(P3): 
(P1) The negative part $\mathfrak g(\mathfrak m)_-
=\bigoplus\limits_{p<0}\mathfrak g(\mathfrak m)_p$ of 
$\pcgla g$ coincides with a given FGLA $\mathfrak m$ as a GLA; 
(P2) For $x\in \mathfrak g(\mathfrak m)_p$ $(p\geqq0)$, $[x,\mathfrak g_{-1}]=0$ 
implies $x=0$; 
(P3) $\pcgla g$ is maximum among GLAs satisfying the conditions (P1) and (P2) 
above. 
The GLA $\pcgla g$ is called the prolongation of the FGLA 
$\mathfrak m$. 
Given the prolongation $\pcgla g$ of an FGLA $\mathfrak m$, 
an element $E$ of $\mathfrak g(\mathfrak m)_0$ is called the characteristic element of 
$\pcgla g$ if $[E,x]=px$ for all $x\in\mathfrak g(\mathfrak m)_p$ and $p\in\mathbb Z$. 
Also $\ad(\mathfrak g(\mathfrak m)_0)|\mathfrak m$ is a subalgebra  of the derivation algebra $\Der(\mathfrak m)$ of $\mathfrak m$ isomorphic to $\mathfrak g(\mathfrak m)_0$; 
we identify it with $\mathfrak g(\mathfrak m)_0$ in what follows, 
so that $D\in\mathfrak g(\mathfrak m)_0$ is identified with $\ad(D)|\mathfrak m$ 
(For the details of FGLAs and a construction of the 
prolongation, see \cite{Tan70:1}*{\S5}). 
Note that the prolongation of a nondegenerate FGLA 
$\mathfrak m=\bigoplus\limits_{p<0}\mathfrak g_p$ of the second kind is of infinite dimension if $\dim \mathfrak g_{-2}\leqq2$. 

The first purpose of this paper is to investigate 
the prolongation of the FGLA associated with a pseudo H-type Lie algebra. 
In \cite{AS14:1} A.~Altomani and A.~Santi classified  
the prolongation of extended translation algebras provided that it is nontrivial. 
Since pseudo H-type Lie algebras are contained in 
the class of extended translation algebras, we can apply the results of \cite{AS14:1,AS14:2} to our study. 
In \S4 we obtain the following theorem (Theorem \ref{th42}): 
Let $(\mathfrak n,\innerproduct)$ be a pseudo H-type Lie algebra 
and $\pcgla[n]{g}$ be the prolongation of the FGLA 
associated with $(\mathfrak n,\innerproduct)$. 
Assume that $\dim\mathfrak n_{-2}\geqq3$ and $\mathfrak g(\mathfrak n)_1\ne0$. 
Then 
(1) $\pcgla[n]{g}$ is isomorphic to a simple GLA (abbreviated to SGLA), whose complexification is also simple;  
(2) In addition, if $\sgn(\innerproduct_{-2}\ne(1,3),(3,1)$, then 
$(\mathfrak n,\innerproduct)$ is isomorphic to a 
comH-type Lie algebra, which is appeared below (also see \S4). 

We next give the notion of a conformal pseudo-subriemannian FGLA and 
its prolongation. 
We say that the pair $(\mathfrak m,[g])$ of a real FGLA 
$\mathfrak m=\bigoplus\limits_{p<0}\mathfrak g_p$  of 
the $\mu$-th kind $(\mu\geqq2)$ and the conformal class $[g]$ of 
a scalar product $g$ on $\mathfrak g_{-1}$ is a 
\textit{conformal pseudo-subriemannian FGLA} (CPSF). 
For a given CPSF $(\mathfrak m,[g])$ 
let $\pcgla g$ be the prolongation of $\mathfrak m$, and let 
 $\mathfrak g_0$ be the subalgebra of $\mathfrak g(\mathfrak m)_0$ consisting 
of all elements $D$ of $\mathfrak g(\mathfrak m)_0$ 
such that $\ad(D)|\mathfrak g_{-1}\in\mathfrak{co}(\mathfrak g_{-1},g)$.  
We define a sequence $(\mathfrak g_p)_{p\geqq1}$ inductively as follows: 
$\ell$ being a positive integer, suppose that we defined $\mathfrak g_1,\dots,\mathfrak g_{\ell-1}$ as subspaces of 
$\mathfrak g(\mathfrak m)_1,\dots,\mathfrak g(\mathfrak m)_{\ell-1}$ 
respectively, in such a way that $[\mathfrak g_p,\mathfrak g_r]\subset\mathfrak g_{p+r}$ $(0<p<\ell,r<0)$. Then we define $\mathfrak g_\ell$ to be the subspace of 
$\mathfrak g(\mathfrak m)_\ell$ consisting of all the elements $D$ of 
$\mathfrak g(\mathfrak m)_\ell$ such that 
$[D,\mathfrak g_r]\subset\mathfrak g_{\ell+r}$ $(r<0)$. 
If we put $\gla g$, then it becomes a graded subalgebra of $\pcgla g$, 
which is called the prolongation of $(\mathfrak m,\mathfrak g_0)$. 
The prolongation of $(\mathfrak m,\mathfrak g_0)$ is also called that of the CPSF $(\mathfrak m,[g])$. 
The prolongation $\gla g$ of the CPSF $(\mathfrak m,[g])$ is finite dimensional. 
If $\gla g$ is semisimple, then the CPSF $(\mathfrak m,[g])$ is said to 
be of semisimple type. Note that the prolongation of CPSF $(\mathfrak m,[g])$ of semisimple type is an SGLA and the complexification is also simple. 
In the previous paper \cite{Yat18:1} we classified the prolongations of 
CPSFs of semisimple type. 

Let $(\mathfrak n,\innerproduct)$ be a pseudo H-type Lie algebra. 
The pair $(\mathfrak n,[\innerproduct_{-1}])$ becomes a CPSF, which is called associated with $(\mathfrak n,\innerproduct)$. 
Here we denote by $\innerproduct_{k}$ 
the restriction of $\innerproduct$ to $\mathfrak n_k$. 
The second purpose of this paper is to classify the prolongation of CPSFs 
associated with pseudo $H$-type Lie algebras. 

In \cite{KS17:1} A.~Kaplan and M.~Sublis introduced the notion of a 
divH-type Lie algebra (or a Lie algebra of type divH) induced from finite dimensional real division 
algebras and classified the finite dimensional real SGLAs 
whose negative parts are isomorphic to some divH-type Lie algebra. 
In \cite{KS16:1} they also proved that the prolongation of the FGLA associated 
with an H-type Lie algebra is not trivial if and only if it is a divH-type Lie algebra. 
In \S3, motivated by the studies in \cite{KS17:1} and \cite{Gom96:1}, 
we define \textit{comH-type Lie algebras} 
induced from finite dimensional real composition algebras,  
which are decomposed into three classes (comH-type Lie algebras of the first, 
the second and the third classes). 
Note that our construction is slightly different from the definition in \cite{KS17:1}. 

The second purpose of this paper is to determine the prolongations of the FGLAs associated with comH-type Lie algebras by an elementary method. 
It is known that a pseudo H-type Lie algebra satisfying the $J^2$-condition 
becomes a comH-type Lie algebra of the first class, and vice versa (cf.\cite{MKM18:1}). 
In \S6 we prove that a comH-type Lie algebra 
satisfies the $J^2$-condition if and only if the associated CPSF is of semisimple type  (Theorem \ref{th51}). 

In \S6 we show the following theorem (Theorem \ref{th62}): 
Let $(\mathfrak n,\innerproduct)$ be a pseudo $H$-type Lie algebra 
and $\gla g$ be the prolongation of a CPSF $(\mathfrak n,[\innerproduct_{-1})$. 
If $\mathfrak g_1\ne0$, then 
it is isomorphic to a comH-type Lie algebra of the first class and 
$\gla g$ is an SGLA. 
In addition if $\dim \mathfrak n_{-2}\geqq3$, then 
$\gla g$ coincides with the prolongation of $\mathfrak n$. 

In the final section we state the fact that the pseudo-Iwasawa extension of a pseudo H-type Lie algebra is Einstein.  
\section*{Notation and Conventions}
Throughout the paper the following notation is used. 
\subsection*{Matrices}
\begin{enumerate}
\item The $n\times n$ identity matrix (resp. zero matrix) is written $1_n$ 
(resp. $0_n$). Also 
for a set $X$ we denote by $1_X$ the identity transformation of $X$. 
\item The block diagonal matrix (resp. The anti-diagonal matrix) with the diagonal blocks (resp. the anti-diagonal blocks) $A_1,\dots,A_p$ (in that order) 
will be denoted by 
$A_1\oplus \cdots \oplus A_p$ (resp. 
$A_1\ominus \cdots \ominus A_p$).
$$A_1\oplus \cdots \oplus A_p
=\begin{bmatrix}
A_1 & 0 & \cdots & 0\\
0 & A_2 & \cdots & 0 \\
\vdots & \vdots &\ddots & \vdots\\ 
0 & 0 & \cdots & A_p
\end{bmatrix},\quad 
A_1\ominus \cdots \ominus A_p
=\begin{bmatrix}
0 & \cdots  & 0 & A_1\\
0 & \cdots & A_2 & 0 \\
\vdots & \iddots &\vdots & \vdots\\ 
A_p & \cdots  & 0 & 0
\end{bmatrix}.
$$
\item For positive integers $r,s$ we define a matrix $1_{r,s}$ as follows: 
$$1_{r,s}=1_r\oplus(-1_s)$$
\item We denote by $K_m$ the sip matrix of the size $m$, i.e., 
$K_m$ is an $m\times m$ matrix with the $(i,j)$-component $\delta_{i,m-j+1}$. 
Then $K_m$ is a real symmetric matrix with $K_m^2=1_m$. 
\item We define an $n\times n$ symmetric real matrix $S_{p,q}$ as follows: 
$$S_{p,q}=K_p\ominus 1_q\ominus K_p=
\begin{bmatrix}
0 & 0 & K_p \\
0 & 1_q & 0 \\
K_p & 0 & 0 \\
\end{bmatrix}\qquad (p\geqq1,q\geqq0,2p+q=n).$$
Here the center column and the center row of $S_{p,q}$ should be deleted 
when $q=0$. Also we set $S_{0,q}=1_q$. 
Then $S_{p,q}$ is a symmetric real matrix with signature $(p+q,p)$. 
\item 
We put  
$$J_{2m}=K_m\ominus (-K_m)
,\quad 
I_{2m}=1_m\ominus (-1_m)
.$$
Then $J_{2m}$ and $I_{2m}$ are skew-symmetric matrices. Clearly 
$J_{2m}^2=I_{2m}^2=-1_{2m}$ and 
$K_{2m}J_{2m}=-J_{2m}K_{2m}$. 
\item For a matrix $A$ we denote by $\transpose{A}$ (resp. $A^*$) the transposed matrix (resp. the  conjugate transposed matrix) of $A$. 
\end{enumerate}
\subsection*{Composition algebras}
\begin{enumerate}
\item Blackboard bold is used for the standard systems 
$\mathbb Z$ (the ring of integers), $\mathbb R$ (real numbers),
$\mathbb C$ (complex numbers), $\mathbb C'$ (split complex numbers),
the real division rings $\mathbb H$ (Hamilton's quaternions), 
$\mathbb H'$ (split quaternions), 
$\mathbb O$ (Cayley's [nonassociative] octonions) 
and $\mathbb O'$ (split octonions) 
(cf. \cite{Har90:1}*{Ch.~6}, \cite{Gom96:1}*{2.1}). 
\item 
For $\mathbb K=\mathbb C$, $\mathbb C'$, $\mathbb H$, $\mathbb H'$, 
$\mathbb O$ or $\mathbb O'$, we set 
$\ima \mathbb K=\{~z\in\mathbb K:\rea z=0~\}$.  
\end{enumerate}
\subsection*{Lie algebras}
\begin{enumerate}
\item $\mathrm{(AI)}_\ell$ is the Satake diagram of $\mathfrak{sl}(\ell+1,\mathbb R)$. 

\begin{center}
\begin{minipage}{5cm}
\begin{xy}
(-10,0) *{\textrm{(AI)}_{^\ell}:}="K", 
(0,0) *{\circ}="A"*++!D{{\scriptstyle 1}},
(10,0) *{\circ}="B"*++!D{{\scriptstyle 2}},
(30,0) *{\circ}="C"*++!D{{\scriptstyle \ell-1}},
(40,0) *{\circ}="D"*++!D{{\scriptstyle \ell}},
\ar @{-} "A";"B"
\ar @{.} "B";"C"
\ar @{-} "C";"D"
\end{xy}
\end{minipage}
\end{center}

\item $\mathrm{(AII)}_\ell$ is the Satake diagram of $\mathfrak{sl}(m,\mathbb H)$ 
$(\ell=2m-1)$. 
\begin{center}
\begin{minipage}{5cm}
\begin{xy}
(-20,0) *{\textrm{(AII)}_{^\ell}:}="K", 
(-10,0) *{\bullet}="A"*++!D{{\scriptstyle 1}},
(0,0) *{\circ}="B"*++!D{{\scriptstyle 2}},
(10,0) *{\bullet}="C"*++!D{{\scriptstyle 3}},
(30,0) *{\circ}="D"*++!D{{\scriptstyle \ell-1}},
(40,0) *{\bullet}="E"*++!D{{\scriptstyle \ell}},
\ar @{-} "A";"B"
\ar @{-} "B";"C"
\ar @{.} "C";"D"
\ar @{-} "D";"E"
\end{xy}
\end{minipage}
\end{center}
\item $\mathrm{(AIIIa)}_{\ell,p}$ is the Satake diagram of $\mathfrak{su}(\ell-p+1,p)$ 
$(2\leqq p\leqq \ell/2)$. 
\begin{center}
\begin{minipage}{5cm}
\begin{xy}
(-20,-7) *{\textrm{(AIIIa)}_{\ell,p}:}="K", 
(0,0) *{\circ}="A"*++!D{{\scriptstyle 1}},
(10,0) *{\circ}="B"*++!D{{\scriptstyle 2}},
(30,0) *{\circ}="C"*++!D{{\scriptstyle p}},
(40,0) *{\bullet}="D"*++!D{{\scriptstyle p+1}},
(40,-5) *{\bullet}="E",
(40,-10) *{\bullet}="F",
(40,-15) *{\bullet}="G"*++!U{{\scriptstyle \ell-p}},
(30,-15) *{\circ}="H"*++!U{{\scriptstyle \ell-p+1}},
(10,-15) *{\circ}="I"*++!U{{\scriptstyle \ell-1}},
(0,-15) *{\circ}="J"*++!U{{\scriptstyle \ell}},
\ar @{-} "A";"B"
\ar @{.} "B";"C"
\ar @{-} "C";"D"
\ar @{-} "D";"E"
\ar @{.} "E";"F"
\ar @{-} "F";"G"
\ar @{-} "G";"H"
\ar @{.} "H";"I"
\ar @{-} "I";"J"
\ar @{<->} @/_3mm/ "A";"J"
\ar @{<->} @/_3mm/ "B";"I"
\ar @{<->} @/_3mm/ "C";"H"
\end{xy}
\end{minipage}
\end{center}
\item $\mathrm{(AIIIb)}_\ell$ is the Satake diagram of $\mathfrak{su}(p,p)$ $(\ell=2p-1)$. 
\begin{center}
\begin{minipage}{5cm}
\begin{xy}
(-25,-5) *{\textrm{(AIIIb)}_{\ell}:}="K", 
(0,0) *{\circ}="A"*++!D{{\scriptstyle 1}},
(10,0) *{\circ}="B"*++!D{{\scriptstyle 2}},
(30,0) *{\circ}="C"*++!D{{\scriptstyle p-1}},
(40,-5) *{\circ}="D"*++!D{{\scriptstyle p}},
(30,-10) *{\circ}="E"*++!U{{\scriptstyle p+1}},
(10,-10) *{\circ}="F"*++!U{{\scriptstyle \ell-1}},
(0,-10) *{\circ}="G"*++!U{{\scriptstyle \ell}},
\ar @{-} "A";"B"
\ar @{.} "B";"C"
\ar @{-} "C";"D"
\ar @{-} "D";"E"
\ar @{.} "E";"F"
\ar @{-} "F";"G"
\ar @{<->} @/_3mm/ "A";"G"
\ar @{<->} @/_3mm/ "B";"F"
\ar @{<->} @/_3mm/ "C";"E"
\end{xy}
\end{minipage}
\end{center}

\item $\mathrm{(AIV)}_{\ell}$ is the Satake diagram of $\mathfrak{su}(\ell,1)$. 
\begin{center}
\begin{minipage}{5cm}
\begin{xy}
(-10,0) *{\textrm{(AIV)}_\ell:}="K", 
(0,0) *{\circ}="A"*++!D{{\scriptstyle 1}},(10,0) *{\bullet}="B",(30,0) *{\bullet}="C",
(40,0) *{\circ}="D"*++!D{{\scriptstyle \ell}},
\ar @{-} "A";"B"
\ar @{.} "B";"C"
\ar @{-} "C";"D"
\ar @{<->} @/^6mm/ "A";"D"
\end{xy}
\end{minipage}
\end{center}

\item $\mathrm{(CI)}_\ell$ is the Satake diagram of $\mathfrak{sp}(\ell,\mathbb R)$. 

\begin{center}
\begin{minipage}{5cm}
\begin{xy}
(-10,0) *{\textrm{(CI)}_{\ell}:}="K", 
(0,0) *{\circ}="A"*++!D{{\scriptstyle 1}},
(10,0) *{\circ}="B"*++!D{{\scriptstyle 2}},
(30,0) *{\circ}="C"*++!D{{\scriptstyle \ell-1}},
(40,0) *{\circ}="D"*++!D{{\scriptstyle \ell}},
\ar @{-} "A";"B"
\ar @{.} "B";"C"
\ar @{=>} "D";"C"
\end{xy}
\end{minipage}
\end{center}
\medskip

\item $\mathrm{(CIIa)}_{\ell,p}$ is the Satake diagram of $\mathfrak{sp}(\ell-p,p)$. 
\begin{center}
\begin{minipage}{5cm}
\begin{xy}
(-13,0) *{\textrm{(CIIa)}_{\ell,p}:}="K", 
(0,0) *{\bullet}="A"*++!D{{\scriptstyle 1}},
(10,0) *{\circ}="B"*++!D{{\scriptstyle 2}},
(20,0) *{\bullet}="C"*++!D{{\scriptstyle 3}},
(35,0) *{\circ}="D"*++!D{{\scriptstyle 2p}},
(45,0) *{\bullet}="E"*++!D{{\scriptstyle 2p+1}},
(60,0) *{\bullet}="F"*++!D{{\scriptstyle \ell-1}},
(70,0) *{\bullet}="G"*++!D{{\scriptstyle \ell}},
\ar @{-} "A";"B"
\ar @{-} "B";"C"
\ar @{.} "C";"D"
\ar @{-} "D";"E"
\ar @{.} "E";"F"
\ar @{=>} "G";"F"
\end{xy}
\end{minipage}
\end{center}

\item $\mathrm{(CIIb)}_{\ell}$ is the Satake diagram of $\mathfrak{sp}(p,p)$ $(\ell=2p)$. 
\begin{center}
\begin{minipage}{5cm}
\begin{xy}
(-18,0) *{\textrm{(CIIb)}_{\ell}:}="K", 
(0,0) *{\bullet}="A"*++!D{{\scriptstyle 1}},
(10,0) *{\circ}="B"*++!D{{\scriptstyle 2}},
(20,0) *{\bullet}="C"*++!D{{\scriptstyle 3}},
(40,0) *{\circ}="D"*++!D{{\scriptstyle 2p-2}},
(50,0) *{\bullet}="E"*++!D{{\scriptstyle 2p-1}},
(60,0) *{\circ}="F"*++!D{{\scriptstyle 2p}},
\ar @{-} "A";"B"
\ar @{-} "B";"C"
\ar @{.} "C";"D"
\ar @{-} "D";"E"
\ar @{=>} "F";"E"
\end{xy}
\end{minipage}
\end{center}

\item For the description of finite dimensional SGLAs, we use the notations in 
\cite{Yam93:1}*{\S3}. 
\end{enumerate}

\section{Pseudo H-type Lie algebras}
Following \cite{Cia00:1} we define pseudo H-type Lie algebras. 
Let $\mathfrak n$ be a finite dimensional 2-step nilpotent real Lie algebra 
equipped with a nondegenerate symmetric bilinear form 
$\inph{\cdot}{\cdot}$ on $\mathfrak n$. 
The pair $(\mathfrak n,\inph{\cdot}{\cdot})$ is called a pseudo H-type Lie algebra if the following conditions hold: 
\begin{enumerate}
\renewcommand{\labelenumi}{(H.\arabic{enumi}) }
\item 
The restriction  of $\innerproduct$ to 
the center $\mathfrak n_{-2}$ of $\mathfrak n$ is nondegenerate. 
\item Let $\mathfrak n_{-1}$ be the orthogonal complement of the center $\mathfrak n_{-2}$ of $\mathfrak n$ with respect to $\innerproduct$. 
For any $z\in \mathfrak n_{-2}$ the endomorphism  $J_{z}$ of 
$\mathfrak n_{-1}$ defined by 
\begin{equation}\inph{J_z(x)}{y} =\inph{z}{[x,y]} \qquad 
x,y\in \mathfrak n_{-1},  
\label{eq22}
\end{equation}
satisfies the following condition 
\begin{equation}
J_z^2=-\inph{z}{z} 1_{\mathfrak n_{-1}},
\label{eq23}
\end{equation}  
where $1_{\mathfrak n_{-1}}$ is the identity transformation of $\mathfrak n_{-1}$. 
\end{enumerate}
The condition \eqref{eq23} is called the Clifford condition. 
In particular if $\innerproduct$ is positive definite, 
then $(\mathfrak n,\innerproduct)$ is simply called an H-type Lie algebra. 
Given a pseudo H-type Lie algebra $(\mathfrak n,\innerproduct)$ 
we can easily see that: 
\begin{enumerate}
\renewcommand{\labelenumi}{(\roman{enumi})\ }
\item For any $z\in \mathfrak n_{-2}$ 
the linear mapping $J_z$ is skew-symmetric; 
\item $\mathfrak n=\mathfrak n_{-1}\oplus \mathfrak n_{-2}$ is 
a non-degenerate FGLA of the second kind. 
\end{enumerate}
The FGLA $\mathfrak n=\mathfrak n_{-1}\oplus \mathfrak n_{-2}$ 
is called associated with 
the pseudo H-type Lie algebra $(\mathfrak n,\langle\cdot\mid\cdot\rangle)$.  
The pair $(\mathfrak n=\mathfrak n_{-1}\oplus \mathfrak n_{-2},
[\innerproduct_{-1}])$ becomes a conformal pseudo-subriemannian FGLA 
(CPSF), which is called associated with 
the pseudo H-type Lie algebra $(\mathfrak n,\langle\cdot\mid\cdot\rangle)$. 
Given two pseudo H-type Lie algebras $(\mathfrak n,\innerproduct)$ and 
$(\mathfrak n',\innerproduct')$, we say that 
$(\mathfrak n,\innerproduct)$ is isomorphic to  
$(\mathfrak n',\innerproduct')$ if there exists a Lie algebra isomorphism $\varphi$ of 
$\mathfrak n$ onto $\mathfrak n'$ such that $\varphi$ is an isometry of  
$(\mathfrak n,\innerproduct)$ onto $(\mathfrak n',\innerproduct')$. 
Moreover we say that 
$(\mathfrak n,\innerproduct)$ is equivalent to  
$(\mathfrak n',\innerproduct')$ if 
there exists a Lie algebra isomorphism $\varphi$ of 
$\mathfrak n$ onto $\mathfrak n'$ such that: 
(i) $\varphi(\mathfrak n_{-1})=\mathfrak n'_{-1}$, and 
$\varphi|\mathfrak n_{-1}$ is an isometry or an anti-isometry of 
$(\mathfrak n_{-1},\innerproduct_{-1})$ onto $(\mathfrak n'_{-1},\innerproduct'_{-1})$; 
(ii) $\varphi|\mathfrak n_{-2}$ is an isometry of 
$(\mathfrak n_{-2},\innerproduct_{-2})$ onto 
$(\mathfrak n'_{-2},\innerproduct'_{-2})$. 
We denote by $[(\mathfrak n,\innerproduct)]$ the equivalence class containing 
a pseudo H-type algebra $(\mathfrak n,\innerproduct)$. 
If a pseudo H-type Lie algebra $(\mathfrak n,\innerproduct)$ 
is equivalent to a pseudo H-type Lie algebra 
$(\mathfrak n',\innerproduct')$, 
then the prolongation of $(\mathfrak n,[\innerproduct_{-1}])$ is 
isomorphic to that of  $(\mathfrak n',[\innerproduct'_{-1}])$. 
\begin{lemma}
Let $(\mathfrak n=\mathfrak n_{-1}\oplus \mathfrak n_{-2},\innerproduct)$ 
be a pseudo H-type Lie algebra. 
We define a new scalar product $\innerproduct'$ on $\mathfrak n$ as follows: 
$$\inph{x}{y}'=\alpha\inph{x}{y}\ (x,y\in\mathfrak n_{-1}),\quad
\inph{z}{w}'=\beta\inph{z}{w}\ (z,w\in\mathfrak n_{-2}),\quad
\inph{\mathfrak n_{-1}}{\mathfrak n_{-2}}'=0,$$
where $\alpha,\beta$ are nonzero real numbers. 
The pair $(\mathfrak n=\mathfrak n_{-1}\oplus \mathfrak n_{-2},\innerproduct')$ also becomes a pseudo H-type Lie algebra if and only if $\alpha^2=\beta$. 
In this case, 
the CPSF associated with $(\mathfrak n,\innerproduct')$ is 
$(\mathfrak n,[\alpha \innerproduct_{-1}])$. 
\label{lem24}
\end{lemma}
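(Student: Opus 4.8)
The plan is to verify conditions (H.1) and (H.2) for $(\mathfrak n,\innerproduct')$ directly, reducing everything to the data of $(\mathfrak n,\innerproduct)$ through the scaling factors $\alpha,\beta$. First I would note that the Lie bracket of $\mathfrak n$ is untouched, so its center is still $\mathfrak n_{-2}$; since $\innerproduct'$ restricted to $\mathfrak n_{-2}$ equals $\beta\innerproduct_{-2}$ with $\beta\ne0$, this restriction is again non-degenerate and (H.1) holds automatically. Because $\innerproduct'$ again makes $\mathfrak n_{-1}$ and $\mathfrak n_{-2}$ orthogonal and is non-degenerate on each block (as $\alpha,\beta\ne0$), the orthogonal complement of $\mathfrak n_{-2}$ with respect to $\innerproduct'$ is again $\mathfrak n_{-1}$, and $\innerproduct'$ is a genuine scalar product on $\mathfrak n$.

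Next I would compute the endomorphism $J'_z$ attached to $(\mathfrak n,\innerproduct')$. For $z\in\mathfrak n_{-2}$ and $x,y\in\mathfrak n_{-1}$ one has $[x,y]\in\mathfrak n_{-2}$, hence $\inph{z}{[x,y]}'=\beta\inph{z}{[x,y]}=\beta\inph{J_z(x)}{y}$, while by definition $\inph{J'_z(x)}{y}'=\alpha\inph{J'_z(x)}{y}$. Since $\innerproduct_{-1}$ is non-degenerate, this forces $J'_z=(\beta/\alpha)J_z$. Squaring and invoking the Clifford condition $J_z^2=-\inph{z}{z}1_{\mathfrak n_{-1}}$ for the original gives $(J'_z)^2=-(\beta^2/\alpha^2)\inph{z}{z}1_{\mathfrak n_{-1}}$, whereas the Clifford condition required for $\innerproduct'$ reads $(J'_z)^2=-\inph{z}{z}'1_{\mathfrak n_{-1}}=-\beta\inph{z}{z}1_{\mathfrak n_{-1}}$. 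Thus (H.2) for $(\mathfrak n,\innerproduct')$ is equivalent to the scalar identity $(\beta^2/\alpha^2)\inph{z}{z}=\beta\inph{z}{z}$ for every $z\in\mathfrak n_{-2}$.

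To finish the equivalence I would observe that $\innerproduct_{-2}$ is a non-zero symmetric bilinear form on a real vector space, so by polarization there is some $z\in\mathfrak n_{-2}$ with $\inph{z}{z}\ne0$; for such $z$ the displayed identity collapses to $\beta^2/\alpha^2=\beta$, i.e. (since $\beta\ne0$) to $\beta=\alpha^2$. Conversely, if $\beta=\alpha^2$ then $\beta^2/\alpha^2=\alpha^2=\beta$, so the identity holds for all $z$ and (H.2) is satisfied. Hence $(\mathfrak n,\innerproduct')$ is pseudo $H$-type exactly when $\alpha^2=\beta$. In this case the associated cps-FGLA is, by definition, $(\mathfrak n,[\innerproduct'_{-1}])$, and $\innerproduct'_{-1}=\alpha\innerproduct_{-1}$ by construction, which is $(\mathfrak n,[\alpha\innerproduct_{-1}])$.

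The argument is essentially a bookkeeping computation; the only step requiring a moment's care is the passage from the Clifford condition — which, carrying a factor $\inph{z}{z}$, only constrains $\beta^2/\alpha^2$ on non-isotropic vectors — to the clean scalar relation $\beta=\alpha^2$, using the existence of a non-isotropic vector for the non-degenerate form $\innerproduct_{-2}$, together with the reverse check that this one relation indeed makes the identity hold on all of $\mathfrak n_{-2}$, isotropic vectors included.
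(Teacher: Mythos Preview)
Your proof is correct and follows essentially the same approach as the paper: both compute $J'_z=(\beta/\alpha)J_z$ from the defining relation, square it, and compare with the required Clifford condition $-\inph{z}{z}'1_{\mathfrak n_{-1}}=-\beta\inph{z}{z}1_{\mathfrak n_{-1}}$ to obtain $\beta=\alpha^2$. Your version is somewhat more explicit (verifying (H.1), checking that the orthogonal complement is unchanged, and noting the need for a non-isotropic $z$), but the argument is the same.
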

\begin{proof}
By \eqref{eq22}, 
for $x,y\in\mathfrak n_{-1}$ and $z\in\mathfrak n_{-2}$, 
$\inph{\alpha^{-1}\beta J_{z}(x)}{y}'=\beta\inph{J_{z}(x)}{y}
=\beta\inph{z}{[x,y]}=\inph{z}{[x,y]}'$. 
By \eqref{eq23}, 
$(\alpha^{-1}\beta J_{z})^2=\alpha^{-2}\beta^2 J_{z}^2=
-\alpha^{-2}\beta^2\inph{z}{z}1_{\mathfrak n_{-1}}=-\alpha^{-2}\beta\inph{z}{z}'1_{\mathfrak n_{-1}}$. This proves the first statement. 
The last statement is clear. 
\end{proof}
The proof of the following lemma is due to the proof of \cite{FM17:1}*{Theorem 2}. 
\begin{lemma} 
Let $(\mathfrak n^{(1)},\innerproduct^{(1)})$ 
and $(\mathfrak n^{(2)},\innerproduct^{(2)})$ be pseudo H-type Lie algebras. 
Assume that there exists a GLA isomorphism $\varphi$ of $\mathfrak n^{(1)}$ onto $\mathfrak n^{(2)}$. 
Then there exists a GLA isomorphism $\psi$ of $\mathfrak n^{(1)}$ onto $\mathfrak n^{(2)}$ and a positive real number $\alpha$ 
such that: \upshape{(i)} $\psi|\mathfrak n_{-2}^{(1)}$ is an isometry or an anti-isometry; 
\upshape{(ii)} $\psi|\mathfrak n_{-1}^{(1)}=\alpha\varphi|\mathfrak n_{-1}^{(1)}$.  
\label{lem26}
\end{lemma}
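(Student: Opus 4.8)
The plan is to obtain $\psi$ from $\varphi$ by a scalar rescaling. Since $\mathfrak n^{(1)}$ is an FGLA of the second kind we have $\mathfrak n^{(1)}_{-2}=[\mathfrak n^{(1)}_{-1},\mathfrak n^{(1)}_{-1}]$, so if $\psi|\mathfrak n^{(1)}_{-1}=\alpha\,(\varphi|\mathfrak n^{(1)}_{-1})$ for a scalar $\alpha\ne0$, the only way $\psi$ can be a homomorphism is $\psi|\mathfrak n^{(1)}_{-2}=\alpha^{2}\,(\varphi|\mathfrak n^{(1)}_{-2})$. Hence (i)--(ii) reduce to the single assertion that $\varphi_2:=\varphi|\mathfrak n^{(1)}_{-2}$ is a homothety of the scalar products on the centres (its ratio being positive or negative according to ``isometry'' or ``anti-isometry'', and a positive $\alpha$ being free to absorb the modulus of that ratio). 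So the content is: \emph{a GLA isomorphism between pseudo $H$-type Lie algebras is conformal on the centre.}

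To see this, set $\varphi_1=\varphi|\mathfrak n^{(1)}_{-1}$ and introduce the invertible self-adjoint operators $S$ on $(\mathfrak n^{(1)}_{-1},\inph{\cdot}{\cdot}^{(1)})$ and $T$ on $(\mathfrak n^{(1)}_{-2},\inph{\cdot}{\cdot}^{(1)})$ determined by $\inph{Sx}{y}^{(1)}=\inph{\varphi_1 x}{\varphi_1 y}^{(2)}$ and $\inph{Tz}{w}^{(1)}=\inph{\varphi_2 z}{\varphi_2 w}^{(2)}$. Because $\varphi$ preserves brackets, \eqref{eq22} for the two algebras gives $\inph{J^{(1)}_{Tz}x}{y}^{(1)}=\inph{Tz}{[x,y]}^{(1)}=\inph{\varphi_2 z}{[\varphi_1 x,\varphi_1 y]}^{(2)}=\inph{J^{(2)}_{\varphi_2 z}(\varphi_1 x)}{\varphi_1 y}^{(2)}$ for all $x,y\in\mathfrak n^{(1)}_{-1}$, whence (using the definitions of $S,T$ and non-degeneracy) $J^{(1)}_{Tz}=S\widehat J_z$ with $\widehat J_z:=\varphi_1^{-1}J^{(2)}_{\varphi_2 z}\varphi_1$; moreover \eqref{eq23} for $\mathfrak n^{(2)}$ gives $(\widehat J_z)^2=-\inph{Tz}{z}^{(1)}1$. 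Substituting $\widehat J_z=S^{-1}J^{(1)}_{Tz}$ into the latter and putting $w=Tz$ yields the key identity
\begin{equation}
J^{(1)}_w\,S^{-1}\,J^{(1)}_w=-\inph{w}{T^{-1}w}^{(1)}\,S\qquad(w\in\mathfrak n^{(1)}_{-2}).\tag{$\ast$}
\end{equation}

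Now pick $w_0\in\mathfrak n^{(1)}_{-2}$ with $\epsilon_0:=\inph{w_0}{w_0}^{(1)}=\pm1$ (possible after rescaling, by non-degeneracy). By \eqref{eq23}, $(J^{(1)}_{w_0})^{-1}=-\epsilon_0 J^{(1)}_{w_0}$, so ($\ast$) at $w=w_0$ gives $S^{-1}=-\mu_0\,J^{(1)}_{w_0}SJ^{(1)}_{w_0}$ with $\mu_0:=\inph{w_0}{T^{-1}w_0}^{(1)}\ne0$. Feeding this back into ($\ast$) and using $(J^{(1)}_wJ^{(1)}_{w_0})(J^{(1)}_{w_0}J^{(1)}_w)=\epsilon_0\inph{w}{w}^{(1)}1$ (again \eqref{eq23}), one obtains, for each anisotropic $w$,
\[
L_w\,S\,L_w^{-1}=\rho(w)\,S,\qquad L_w:=J^{(1)}_wJ^{(1)}_{w_0},\qquad\rho(w):=\frac{\inph{w}{T^{-1}w}^{(1)}}{\mu_0\,\epsilon_0\,\inph{w}{w}^{(1)}}.
\]
Taking determinants forces $\rho(w)^{N}=1$ with $N=\dim\mathfrak n^{(1)}_{-1}$, so $\rho(w)\in\{1,-1\}$; since $\rho$ is continuous on the open anisotropic locus, it is locally constant there. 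If $\rho$ equals $1$ on one of its connected components, then $\inph{w}{T^{-1}w}^{(1)}=\mu_0\epsilon_0\inph{w}{w}^{(1)}$ on a non-empty open set, hence identically; if instead $\rho\equiv-1$ throughout, then $\inph{w}{T^{-1}w}^{(1)}=-\mu_0\epsilon_0\inph{w}{w}^{(1)}$ identically. In either case $T^{-1}=\eta\cdot 1$ for some $\eta\ne0$, i.e. $\inph{\varphi_2 z}{\varphi_2 w}^{(2)}=\eta^{-1}\inph{z}{w}^{(1)}$.

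Finally, set $\alpha:=|\eta|^{1/4}>0$ and define $\psi$ by $\psi|\mathfrak n^{(1)}_{-1}=\alpha\varphi_1$ and $\psi|\mathfrak n^{(1)}_{-2}=\alpha^{2}\varphi_2$. Then $\psi$ is a graded, bijective linear map preserving brackets (since $\varphi$ does), hence a GLA isomorphism, and $\inph{\psi z}{\psi w}^{(2)}=\alpha^{4}\eta^{-1}\inph{z}{w}^{(1)}=\sgn(\eta)\,\inph{z}{w}^{(1)}$, so $\psi|\mathfrak n^{(1)}_{-2}$ is an isometry when $\eta>0$ and an anti-isometry when $\eta<0$; together with $\psi|\mathfrak n^{(1)}_{-1}=\alpha(\varphi|\mathfrak n^{(1)}_{-1})$ this is precisely (i)--(ii). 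I expect the crux to be the step ``$T^{-1}$ is scalar'': when $\inph{\cdot}{\cdot}^{(1)}$ restricted to $\mathfrak n^{(1)}_{-1}$ is definite, $S$ is positive self-adjoint, $\tr S\ne0$, and scalarity of $T^{-1}$ drops straight out of the trace of ($\ast$); but for indefinite $\inph{\cdot}{\cdot}^{(1)}$ one may have $\tr S=0$, and the argument must instead run through the conjugation identity $L_wSL_w^{-1}=\rho(w)S$, extracting rigidity from the determinant together with the connectedness and Zariski density of the anisotropic locus.
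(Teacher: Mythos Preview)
Your argument is correct. The paper itself does not give a proof of this lemma: it simply records that the proof is that of \cite[Theorem~2]{FM17:0} and moves on. So there is nothing in the paper to compare against directly; your write-up is a self-contained substitute. The core reduction---that (i)--(ii) amount to showing $\varphi|\mathfrak n^{(1)}_{-2}$ is a homothety, after which $\psi:=\alpha\varphi$ on $\mathfrak n^{(1)}_{-1}$ and $\psi:=\alpha^{2}\varphi$ on $\mathfrak n^{(1)}_{-2}$ with $\alpha=|\eta|^{1/4}$ does the job---is exactly right, and your derivation of $(\ast)$ from the two Clifford conditions via the transfer operators $S,T$ is clean.

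Two minor points of presentation. First, the step from ``$\rho(w)\in\{1,-1\}$ on the anisotropic locus'' to ``$T^{-1}$ is scalar'' can be phrased without the case-split over connected components: setting $q_1(w)=\inph{w}{T^{-1}w}^{(1)}$ and $q_2(w)=\mu_0\epsilon_0\inph{w}{w}^{(1)}$, the polynomial $q_1^{2}-q_2^{2}$ vanishes on the Zariski-dense set $\{q_2\ne0\}$, hence identically, and then $(q_1-q_2)(q_1+q_2)=0$ in the integral domain of real polynomials forces $q_1=\pm q_2$ with a single global sign. This subsumes your component argument and makes the ``hence identically'' explicit. Second, $\mu_0\ne0$ is immediate from $(\ast)$ at $w=w_0$ since the left side is invertible; and $\rho(w)^{N}=1$ gives $\rho(w)\in\{1,-1\}$ simply because a real $N$-th root of unity is $\pm1$ (in fact $N=\dim\mathfrak n^{(1)}_{-1}$ is automatically even here, since any anisotropic $z$ yields an invertible $J^{(1)}_z$ that is skew with respect to $\innerproduct^{(1)}_{-1}$). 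None of this affects correctness.
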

\begin{remark} 
\begin{enumerate}
\fixitem
Let $(\mathfrak n^{(1)},\innerproduct^{(1)})$  and $(\mathfrak n^{(2)},\innerproduct^{(2)})$ be H-type Lie algebras. 
If $\mathfrak n^{(1)}$ is isomorphic to $\mathfrak n^{(2)}$ as a GLA, then
$(\mathfrak n^{(1)},\innerproduct^{(1)})$ is isomorphic to 
$(\mathfrak n^{(2)},\innerproduct^{(2)})$ as an H-type Lie algebra
(\cite{KS16:1}*{Theorem 2}). 
\item Let $(\mathfrak n,\innerproduct)$ be a pseudo H-type Lie algebra. 
If $\sgn(\innerproduct_{-2})=(r,s)$, $s>0$, 
then $(\mathfrak n_{-1},\innerproduct_{-1})$ is neutral (\cite{Cia00:1}*{Proposition 2.2}). 
\end{enumerate}
\end{remark}

\begin{proposition}
Let $\gla g$ be a finite dimensional real SGLA 
such that the negative part $\mathfrak g_-=\bigoplus_{p<0}\limits\mathfrak g_p$ is an FGLA of the second kind. 
Let $\innerproduct^{(i)}$ $(i=1,2)$ be scalar products on $\mathfrak g_-$. 
Assume that: 
\begin{enumerate}
\renewcommand{\labelenumi}{\textup{(\roman{enumi})} }
\item $(\mathfrak g_-,\innerproduct^{(1)})$ and 
$(\mathfrak g_-,\innerproduct^{(2)})$ are pseudo H-type Lie algebras 
whose associated FGLAs coincide with $\mathfrak g_-$ as a GLA. 
\item For $i=1,2$ the prolongation of the associated CPSF 
$(\mathfrak g_-,[\innerproduct_{-1}^{(i)}])$ coincides with $\mathfrak g$. 
\end{enumerate}
Then 
\begin{enumerate}
\item $[\innerproduct_{-1}^{(1)}]$ is equal to $[\innerproduct_{-1}^{(2)}]$ or 
$[-\innerproduct_{-1}^{(2)}]$; 
\item $[\innerproduct_{-2}^{(1)}]=[\innerproduct_{-2}^{(2)}]$, 
 
\end{enumerate} 
Consequently, $(\mathfrak g_-,\innerproduct^{(1)})$ is equivalent to 
$(\mathfrak g_-,\innerproduct^{(2)})$.  
\label{prop22}
\end{proposition}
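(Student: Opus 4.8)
The plan is to reduce everything to one claim: that the operator comparing the two metrics on $\mathfrak g_{-1}$ is a scalar. Put $g^{(i)}:=\innerproduct^{(i)}_{-1}$ and $h^{(i)}:=\innerproduct^{(i)}_{-2}$, let $J^{(i)}_z$ $(z\in\mathfrak g_{-2})$ be the operators of \eqref{eq22}, and let $A\in GL(\mathfrak g_{-1})$, $B\in GL(\mathfrak g_{-2})$ be defined by $g^{(2)}(x,y)=g^{(1)}(Ax,y)$ and $h^{(2)}(z,w)=h^{(1)}(Bz,w)$; since the forms are symmetric, $A$ and $B$ are self-adjoint for $g^{(1)}$ and $h^{(1)}$ respectively. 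I claim that if $A=a\,1_{\mathfrak g_{-1}}$ for some $a\in\mathbb R^{\times}$, then all three conclusions follow. Indeed, comparing \eqref{eq22} for the two structures gives $a\,J^{(2)}_z=J^{(1)}_{Bz}$; squaring and using the Clifford condition \eqref{eq23} for both structures then forces $B=a^{2}1_{\mathfrak g_{-2}}$, whence $g^{(1)}=a^{-1}g^{(2)}$ and $h^{(1)}=a^{-2}h^{(2)}$. This is (1) and (2) --- the latter without sign ambiguity, as $a^{2}>0$ --- and the GLA automorphism of $\mathfrak g_-$ given by multiplication by $|a|^{-1/2}$ on $\mathfrak g_{-1}$ and by $|a|^{-1}$ on $\mathfrak g_{-2}$ is, according to $\sgn a$, an isometry or an anti-isometry on $\mathfrak g_{-1}$ and an isometry on $\mathfrak g_{-2}$; hence it is an equivalence of $(\mathfrak g_-,\innerproduct^{(1)})$ with $(\mathfrak g_-,\innerproduct^{(2)})$. (One may instead produce such a $\psi$ by applying Lemma \ref{lem26} with $\varphi=1_{\mathfrak g_-}$.)

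So it remains to prove $A=a\,1_{\mathfrak g_{-1}}$. By the construction of the prolongation of a cps-FGLA recalled in \S1, the degree-zero part of the prolongation of $(\mathfrak g_-,[g^{(i)}])$ equals $\{D\in\check{\mathfrak g}_0:\ad(D)|\mathfrak g_{-1}\in\mathfrak{co}(\mathfrak g_{-1},g^{(i)})\}$; by hypothesis (ii) both equal the common $\mathfrak g_0$, so $\ad(\mathfrak g_0)|\mathfrak g_{-1}\subset\mathfrak{co}(\mathfrak g_{-1},g^{(1)})\cap\mathfrak{co}(\mathfrak g_{-1},g^{(2)})$. For $D\in\mathfrak g_0$ put $d:=\ad(D)|\mathfrak g_{-1}$; expressing that $d$ lies in both conformal algebras one gets $A^{-1}dA=d-\nu(D)\,1_{\mathfrak g_{-1}}$ for a linear functional $\nu$ on $\mathfrak g_0$, and taking traces forces $\nu\equiv0$. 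Thus $A$ commutes with $\ad(\mathfrak g_0)|\mathfrak g_{-1}$.

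Now I bring in the Clifford action. For $z,w\in\mathfrak g_{-2}$ the operator $[J^{(1)}_z,J^{(1)}_w]$ is $g^{(1)}$-skew, and a short computation with the Clifford relations shows it is the $\mathfrak g_{-1}$-component of a degree-zero derivation of $\mathfrak m$ (its $\mathfrak g_{-2}$-component being the skew endomorphism $v\mapsto 4h^{(1)}(v,z)w-4h^{(1)}(v,w)z$); hence $[J^{(1)}_z,J^{(1)}_w]\in\ad(\mathfrak g_0)|\mathfrak g_{-1}$, so $A$ commutes with it by the preceding paragraph. Combined with $\{J^{(1)}_z,J^{(1)}_w\}=-2h^{(1)}(z,w)\,1_{\mathfrak g_{-1}}$, this shows $A$ commutes with every product $J^{(1)}_zJ^{(1)}_w$. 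Feeding this back --- multiplying a polarized form of $(J^{(2)}_z)^{2}=-h^{(2)}(z,z)\,1_{\mathfrak g_{-1}}$, rewritten through $AJ^{(2)}_z=J^{(1)}_{Bz}$ and the self-adjointness of $A$, on the left by a suitable invertible $J^{(1)}_t$ and using that $J^{(1)}_tJ^{(1)}_u$ commutes with $A$ --- one finds in turn that $B$ is a scalar operator and that $A$ commutes with every $J^{(1)}_z$. Since the $J^{(1)}_z$ are $g^{(1)}$-skew and $\ad(\mathfrak g_0)|\mathfrak g_{-1}$ is closed under the $g^{(1)}$-adjoint, $A$ therefore lies in the commutant of the $*$-closed associative algebra $\mathcal D\subset\End(\mathfrak g_{-1})$ generated by $\ad(\mathfrak g_0)|\mathfrak g_{-1}$ and the $J^{(1)}_z$, where $*$ denotes the $g^{(1)}$-adjoint.

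The final step --- and the main obstacle --- is to conclude from this that $A$ is a real scalar, i.e.\ that the $g^{(1)}$-self-adjoint elements of $\End_{\mathcal D}(\mathfrak g_{-1})$ are just $\mathbb R\,1_{\mathfrak g_{-1}}$. This is where the simplicity of $\mathfrak g$ is used decisively: it pins down how $\mathfrak g_{-1}$ decomposes both as a $\mathfrak g_0$-module and as a Clifford module (so that $\End_{\mathcal D}(\mathfrak g_{-1})$ is $\mathbb R$, $\mathbb C$ or $\mathbb H$, possibly with a pair of $g^{(1)}$-dually-paired summands), and the pseudo $H$-type axiom that the $J^{(1)}_z$ are $g^{(1)}$-skew forces $*$ to be (quaternionic) conjugation on $\End_{\mathcal D}(\mathfrak g_{-1})$, whose fixed part is $\mathbb R\,1_{\mathfrak g_{-1}}$. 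Granting this, $A=a\,1_{\mathfrak g_{-1}}$ and the proof closes via the first paragraph. I expect this last step --- essentially a finite verification over the list of simple graded Lie algebras arising as prolongations of cps-FGLAs associated with pseudo $H$-type Lie algebras --- to be the only genuine difficulty; everything preceding it is formal.
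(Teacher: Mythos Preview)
Your first paragraph is exactly the paper's argument: once $A=a\,1_{\mathfrak g_{-1}}$ is known, Lemma~\ref{lem24} forces $B=a^{2}1_{\mathfrak g_{-2}}$, and the rescaling by $|a|^{-1/2}$ on $\mathfrak g_{-1}$ and $|a|^{-1}$ on $\mathfrak g_{-2}$ gives the equivalence. The paper obtains the key input ``$A$ is a scalar'' by quoting \cite[Proposition~5.2]{yat18:01}, which says that for a simple prolongation the conformal class on $\mathfrak g_{-1}$ is determined up to sign; your second paragraph (that $A$ commutes with $\ad(\mathfrak g_0)|\mathfrak g_{-1}$, via $A^{-1}dA=d-\nu(D)1$ and the trace argument) is a correct first step toward reproving that proposition.

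The gap is in your third paragraph. From the polarized Clifford relation, after multiplying on the left by an invertible $J^{(1)}_t$ and using $[A,J^{(1)}_tJ^{(1)}_u]=0$, what one actually obtains is
\[
h^{(1)}(Bz,Bz)\,A^{-1}J^{(1)}_t \;=\; h^{(2)}(z,z)\,J^{(1)}_t A,
\]
hence $B=c\,1$ (as you claim) and the \emph{twisted} relation $J^{(1)}_t A = c\,A^{-1}J^{(1)}_t$; taking the $g^{(1)}$-adjoint gives the companion $AJ^{(1)}_t = c\,J^{(1)}_t A^{-1}$. These are consistent with $A^{2}=c\,1$ but do \emph{not} yield $[A,J^{(1)}_t]=0$. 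Consequently you have not placed $A$ in the commutant of your algebra $\mathcal D$, and the final paragraph's appeal to $\End_{\mathcal D}(\mathfrak g_{-1})\in\{\mathbb R,\mathbb C,\mathbb H\}$ is not yet available.

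Two ways to close this. Either work with the commutant of $\ad(\mathfrak g_0)|\mathfrak g_{-1}$ alone --- that is precisely what \cite[Proposition~5.2]{yat18:01} analyzes, and the case check you anticipate is carried out there --- or keep the twisted relation and argue separately that a $g^{(1)}$-self-adjoint $A$ commuting with $\ad(\mathfrak g_0)|\mathfrak g_{-1}$ and satisfying $J_zAJ_z^{-1}=cA^{-1}$ must be scalar, which again comes down to the same finite verification over the list of simple prolongations. In either case the ``formal'' part ends one step earlier than you indicate: the honest input from simplicity is the irreducibility/commutant statement for the $\mathfrak g_0$-module $\mathfrak g_{-1}$, not a statement about the Clifford commutant.
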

\begin{proof} 
Let $\varphi$ be the identity transformation of $\mathfrak g_-$. 
By the assumption (i) $\varphi$ is a GLA isomorphism of $\mathfrak g_-$ onto itself. 
By Lemma \ref{lem26}, there exists a GLA isomorphism $\psi$ of 
$\mathfrak g_-$ onto itself such that: (a) the restriction $\psi|\mathfrak g_{-2}$ to 
$\mathfrak g_{-2}$ of $\psi$ is an isometry or an anti-isometry; 
(b) there exist a nonzero real number $\alpha'$ 
such that 
$\psi|\mathfrak g_{-2}=\alpha'^2\varphi|\mathfrak g_{-2}$ and 
$\psi|\mathfrak g_{-1}=\alpha'\varphi|\mathfrak g_{-1}$.  
Hence $\alpha'^4\inph{\cdot}{\cdot}^{(2)}_{-2}
=\pm\inph{\cdot}{\cdot}^{(1)}_{-2}$. 
By assumptions (ii) and \cite{Yat18:1}*{Proposition 5.2}, 
$\inph{\cdot}{\cdot}^{(2)}_{-1}$ coincides with 
$\inph{\cdot}{\cdot}^{(1)}_{-1}$ multiplied by a nonzero real number. 
By Lemma \ref{lem24}, 
there exists a nonzero real number $\alpha$ such that 
$\inph{\cdot}{\cdot}^{(2)}_{-1}=\alpha
\inph{\cdot}{\cdot}^{(1)}_{-1}$, 
$\inph{\cdot}{\cdot}^{(2)}_{-2}=\alpha^2
\inph{\cdot}{\cdot}^{(1)}_{-2}$. 
Thus assertions (1) and (2) are proved. 
We define a linear mapping $f$ of $\mathfrak g_-$ into itself as follows: 
$$f(x)=|\alpha|^{-1/2}x\quad (x\in \mathfrak g_{-1}), \qquad
f(z)=|\alpha|^{-1} z \quad (z\in \mathfrak g_{-2});$$ 
then $f$ is a GLA isomorphism and we see that 
$$\begin{aligned}
& \inph{f(x)}{f(y)}^{(2)}=|\alpha|^{-1}\inph{x}{y}^{(2)}=
\sgn(\alpha)\inph{x}{y}^{(1)}\quad (x,y\in\mathfrak g_{-1}), \\
& \inph{f(z)}{f(z')}^{(2)}=|\alpha|^{-2}\inph{z}{z'}^{(2)}=
\inph{z}{z'}^{(1)}\quad (z,z'\in\mathfrak g_{-2}). 
\end{aligned}$$
Hence 
$(\mathfrak g_-,\innerproduct^{(1)})$ is equivalent to 
$(\mathfrak g_-,\innerproduct^{(2)})$. 
\end{proof}
\section{ComH-type Lie algebras}
In this section we introduce comH-type Lie algebras. 
The comH-type Lie algebras consist of comH-type Lie algebras 
$\mathfrak H^{(1)}(\mathbb F,A)$ of the first class, 
$\mathfrak H^{(2)}(\mathbb F,A,\gamma)$ of the second class, 
and $\mathfrak H^{(3)}(\mathbb F,A)$ of the third class, 
which is defined below. 
In particular, if $\mathbb F=\mathbb C,\mathbb C',\mathbb H$ or 
$\mathbb H'$, then the comH-type Lie algebra is said to be 
of classical type. 
\subsection{Cayley algebras}
Let $\mathbb F$ be one of composition algebras $\mathbb C$, $\mathbb C'$, $\mathbb H$, $\mathbb H'$, $\mathbb O$, $\mathbb O'$ over $\mathbb R$. 
We denote by $\mathbb F(\gamma)$ the Cayley extension of $\mathbb F$ defined by $\gamma$, where $\gamma=\pm1$ (cf. \cite{Bou98:1}*{Ch.3, no.5}). Namely $\mathbb F(\gamma)$ is an algebra over 
$\mathbb R$ which $\mathbb F(\gamma)=\mathbb F\times\mathbb F$ as a module and the multiplication on $\mathbb F(\gamma)$ is defined by 
$$(x_1,x_2)(y_1,y_2)
=(x_1y_1+\gamma \overline{y_2}x_2,x_2\overline{y_1}+y_2x_1).$$ 
Clearly $\mathbb F\times\{0\}$ is a subalgebra of $\mathbb F(\gamma)$ isomorphic to $\mathbb F$; we shall identify it with $\mathbb F$ in what follows, 
so that $x\in\mathbb F$ is identified with $(x,0)$. 
Let $\ell=(0,1)$, so that $(x,y)=x+y\ell$ for $x,y\in\mathbb F$. 
Note that: 
(i) $\ell\alpha=\overline{\alpha}\ell$; 
(ii) $\alpha(\beta \ell)=(\beta\alpha)\ell$; 
(iii) $(\alpha \ell)\beta=(\alpha\bar{\beta})\ell$; 
(iv) $(\alpha \ell)(\beta \ell)=\gamma(\overline{\beta}\alpha)$; 
(v) $\ell^2=\gamma$, 
where $\alpha,\beta\in \mathbb F$. 
When $\mathbb F=\mathbb H$ (resp. $\mathbb F=\mathbb H'$) 
we put $\mathbb F_0=\mathbb C$, and 
$\gamma_0=-1$ 
(resp. $\gamma_0=1)$; then 
$\mathbb F=\mathbb F_0(\gamma_0)$. Let $\ell_0$ be the element of $\mathbb F$ corresponding to 
the element $(0,1)\in\mathbb F_0(\gamma_0)=\mathbb F_0\times \mathbb F_0$. 
We denote by $\mathbb F^c=\mathbb F\oplus\sqrt{-1}\mathbb F$, 
$\mathbb F(\gamma)^c=\mathbb F(\gamma)\oplus\sqrt{-1}\mathbb F(\gamma)$ the complexifications 
of $\mathbb F$, $\mathbb F(\gamma)$ respectively. 
Let $\pr_1$ and $\pr_2$ be the projections of 
$\mathbb F(\gamma)^c=\mathbb F^c\times \mathbb F^c$ onto $\mathbb F^c$ 
defined by $\pr_i(x_1,x_2)=x_i$ $(i=1,2)$. 
Note that $\pr_1(\overline{\alpha})=\overline{\pr_1(\alpha)}$, 
$\pr_2(\overline{\alpha})=-\pr_2(\alpha)$, 
$\pr_1(\ell\alpha)=\gamma\overline{\pr_2(\alpha)}$, 
$\pr_2(\ell\alpha)=\overline{\pr_1(\alpha)}$, 
where $\alpha\in \mathbb F(\gamma)^c$. 
We define a mapping $R$ of $\mathbb F(\gamma)^c$ to $\mathbb R$ 
by $R(u+\sqrt{-1}v)=\rea(u)$ $(u,v\in\mathbb F(\gamma))$. 
For $z\in \mathbb F=\mathbb F\times\{0\}$ and $\alpha\in \mathbb F(\gamma)^c$ we obtain $R(z\pr_1(\alpha))=R(z\alpha)$. 
We extend the conjugation ``$\overset{-}{\cdot}$'' on $\mathbb F(\gamma)$ 
to $\mathbb F(\gamma)^c$ by 
$\overline{u+\sqrt{-1}v}=\overline{u}+\sqrt{-1}\overline{v}$. 
\subsection{ComH-type Lie algebras of the first class}
\label{sec32}
Let $\mathbb F$ be $\mathbb C$, $\mathbb C'$, $\mathbb H$, $\mathbb H'$, 
$\mathbb O$ or $\mathbb O'$. Let $A$ be a  real symmetric matrix of order $n$ 
such that $A^2=1_n$ and $\sgn(A)=(r,s)$ $(r\geqq s)$. 

We put 
$$\mathfrak n_{-1}=\mathbb F^n,\quad \mathfrak n_{-2}=\ima \mathbb F, 
\quad \mathfrak n=\mathfrak n_{-1}\oplus \mathfrak n_{-2},$$
where we assume $n=1$ in case $\mathbb F=\mathbb O$ or $\mathbb O'$. 
Note that $\mathbb F^n$ is the set of all the $\mathbb F$-valued row vectors of order $n$. 
We define a bracket operation on $\mathfrak n$ as follows: 
$$[x,y]=-2\ima(xSy^*)=yAx^*-xAy^* \quad (x,y\in \mathfrak n_{-1}), 
\quad [\mathfrak n_{-1},\mathfrak n_{-2}]=[\mathfrak n_{-2},\mathfrak n_{-2}]=0;$$ 
then $(\mathfrak n,[\cdot,\cdot])$ becomes an FGLA of the second kind. 
Furthermore we define a symmetric bilinear form $\langle \cdot\mid \cdot\rangle$ on $\mathfrak n$ 
as follows: 
$$\begin{aligned}
&
\langle x\mid y\rangle=2\rea(xAy^*)\quad (x,y\in\mathfrak n_{-1}), \\
& \langle z\mid w\rangle=\rea(z\overline{w})=-\rea(zw)\quad (z,w\in\mathfrak n_{-2}),
\quad \langle \mathfrak n_{-1}\mid \mathfrak n_{-2}\rangle=0. 
\end{aligned}$$
The linear mapping $J_z$ defined by \eqref{eq23} has the following form: 
$J_z(x)=-zx$. 
Thus $(\mathfrak n,\langle\cdot\mid\cdot\rangle)$ becomes a pseudo H-type Lie algebra, which is denoted by 
$\mathfrak H^{(1)}(\mathbb F,A)
=(\mathfrak h^{(1)}(\mathbb F,A),\innerproduct)$. 
The comH-type Lie algebra $\mathfrak H^{(1)}(\mathbb F,A)$ 
is called a comH-type Lie algebra of the first class. 
We denote the FGLA associated with $\mathfrak H^{(1)}(\mathbb F,A)$ 
by $\mathfrak h^{(1)}(\mathbb F,A)=\bigoplus\limits_{p=-1}^{-2}
\mathfrak h^{(1)}(\mathbb F,A)_p$. 

\begin{lemma}
Let $(r,s)$ be the signature of $A$. Let $\mathbb F$ be $\mathbb C$, $\mathbb C'$, $\mathbb H$, $\mathbb H'$, 
$\mathbb O$ or $\mathbb O'$. 
\begin{enumerate}
\item $\mathfrak H^{(1)}(\mathbb F,A)$ is  
isomorphic to 
$\mathfrak H^{(1)}(\mathbb F,1_{r,s})$. 
\item  $\mathfrak H^{(1)}(\mathbb F,1_{r,s})$ is equivalent to 
$\mathfrak H^{(1)}(\mathbb F,1_{s,r})$. 
\end{enumerate}
\label{lem31}
\end{lemma}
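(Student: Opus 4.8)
The plan is to prove both assertions by writing down explicit maps built from the data already present in the construction of $\mathfrak H^{(1)}(\mathbb F,S)$.

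For assertion (1), the strategy is to diagonalize $S$. Since $S$ is a real symmetric matrix with $S^2=1_n$, its eigenvalues are $\pm1$, so there is an orthogonal matrix $P\in O(n,\mathbb R)$ with $P^{-1}SP={}^tPSP=1_{r,s}$, where $(r,s)$ is the signature of $S$. The claim is then that the $\mathbb F$-linear map $\varphi\colon\mathbb F^n\to\mathbb F^n$ given by right multiplication $x\mapsto xP$ on $\mathfrak n_{-1}$, together with the identity on $\mathfrak n_{-2}=\ima\mathbb F$, is an isomorphism of pseudo $H$-type Lie algebras from $\mathfrak H^{(1)}(\mathbb F,S)$ onto $\mathfrak H^{(1)}(\mathbb F,1_{r,s})$. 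This requires checking three things: that $\varphi$ respects the bracket, i.e. $\varphi([x,y]_S)=[\varphi(x),\varphi(y)]_{1_{r,s}}$, which reduces to the identity $xS y^*=(xP)\,1_{r,s}\,(yP)^*$; here one uses that $P$ has real entries so $(yP)^*={}^tP y^*$ (the conjugate–transpose only conjugates the $\mathbb F$-entries, and $P$ is unchanged), and then ${}^tPSP=1_{r,s}$ does it. The same computation with $\rea(\cdot)$ in place of $\ima(\cdot)$ shows $\varphi$ is an isometry on $\mathfrak n_{-1}$, and on $\mathfrak n_{-2}$ it is the identity hence trivially an isometry; so $\varphi$ is an isomorphism of pseudo $H$-type Lie algebras. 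One small point to verify is that in the octonionic cases $n=1$, where $S=\pm1$ already and the statement is vacuous or trivial.

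For assertion (2), the strategy is to produce a map that exchanges the roles of the positive and negative parts of the form on $\mathfrak n_{-1}$ while only rescaling the form on $\mathfrak n_{-2}$ by a sign. Writing $1_{r,s}=\mathrm{diag}(1_r,-1_s)$ and $1_{s,r}=\mathrm{diag}(-1_s,1_r)$, one checks first that $1_{s,r}$ is (orthogonally, over $\mathbb R$) congruent to $-1_{r,s}$, so by Lemma~\ref{lem31}(1) it suffices to relate $\mathfrak H^{(1)}(\mathbb F,1_{r,s})$ and $\mathfrak H^{(1)}(\mathbb F,-1_{r,s})$. For the latter, take $\varphi$ to be the identity on $\mathfrak n_{-1}=\mathbb F^n$ and $\varphi=-1$ on $\mathfrak n_{-2}=\ima\mathbb F$. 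Then on the bracket, $[x,y]_{-S}=-2\ima(x(-S)y^*)=-(-2\ima(xSy^*))=-[x,y]_S=\varphi([x,y]_S)$, so $\varphi$ is a GLA isomorphism. On $\mathfrak n_{-1}$ it satisfies $\langle\varphi x\mid\varphi y\rangle_{-S}=2\rea(x(-S)y^*)=-\langle x\mid y\rangle_S$, so $\varphi|\mathfrak n_{-1}$ is an anti-isometry; on $\mathfrak n_{-2}$, $\langle\varphi z\mid\varphi w\rangle=\langle -z\mid -w\rangle=\langle z\mid w\rangle$, an isometry. Thus $\varphi$ realizes an equivalence in the sense defined in \S2, giving $\mathfrak H^{(1)}(\mathbb F,1_{r,s})\sim\mathfrak H^{(1)}(\mathbb F,-1_{r,s})\cong\mathfrak H^{(1)}(\mathbb F,1_{s,r})$.

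The main obstacle is not conceptual but bookkeeping: one must be careful with the non-associativity and non-commutativity of $\mathbb F$ (especially the octonionic cases) when manipulating expressions such as $xSy^*$, and with the fact that $(\,\cdot\,)^*$ denotes conjugate-transpose, so scalar matrices and real matrices interact differently with it. In particular one should confirm that the needed identities $(yP)^*={}^tP\,y^*$ and $\rea((xS)y^*)=\rea(x(Sy^*))$ hold entrywise, using only $\rea(ab)=\rea(ba)$ and $\rea(\bar a)=\rea(a)$ in $\mathbb F$, which do hold in all six Cayley algebras; once this is in hand, both assertions follow from the explicit maps above without any further work. It is also worth recording that the construction is consistent with Lemma~\ref{lem24} (the $\alpha=-1$, $\beta=1$ case corresponds to the anti-isometry), so no new scalar-product verification beyond the Clifford condition, already built into the definition of $\mathfrak H^{(1)}$, is needed.
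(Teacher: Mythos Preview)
Your proof is correct and, for part~(1), essentially identical to the paper's: both diagonalize $S$ by a real orthogonal matrix and push the bracket and scalar product through via $x\mapsto xP$, using only that $P$ has real entries so $(yP)^*={}^tP\,y^*$.

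For part~(2) the approaches differ only cosmetically. The paper does it in a single stroke with the reversal permutation $K_n$ (so $K_n1_{r,s}K_n=-1_{s,r}$), taking $\psi(x)=xK_n$ on $\mathfrak n_{-1}$ and $\psi(z)=-z$ on $\mathfrak n_{-2}$; this is simultaneously an anti-isometry on $\mathfrak n_{-1}$ and an isometry on $\mathfrak n_{-2}$. You instead factor this as an equivalence $\mathfrak H^{(1)}(\mathbb F,1_{r,s})\sim\mathfrak H^{(1)}(\mathbb F,-1_{r,s})$ via $(\mathrm{id},-1)$, followed by the isomorphism $\mathfrak H^{(1)}(\mathbb F,-1_{r,s})\cong\mathfrak H^{(1)}(\mathbb F,1_{s,r})$ from part~(1). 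Composing your two maps literally recovers a block-swap permutation playing the same role as $K_n$, so the two arguments are the same computation split differently. One small slip: with the paper's convention $1_{s,r}=\mathrm{diag}(1_s,-1_r)$, not $\mathrm{diag}(-1_s,1_r)$ as you wrote; this does not affect your argument since you only use that $1_{s,r}$ and $-1_{r,s}$ share signature $(s,r)$ and hence are orthogonally congruent.
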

\begin{proof}
(1) 
There exists a real orthogonal matrix $P$ such that 
$PAP^{-1}=1_{r,s}$. 
We define a linear mapping $\varphi$ of 
$\mathfrak h^{(1)}(\mathbb F,1_{r,s})$ to 
$\mathfrak h^{(1)}(\mathbb F,A)$ as follows: 
$$\varphi(x)=
xP\quad (x\in \mathfrak h^{(1)}(\mathbb F,1_{r,s})_{-1}), \quad 
\varphi(z)=z\quad (z\in \mathfrak h^{(1)}(\mathbb F,1_{r,s})_{-2}).$$  
Then $\varphi$ is an isomorphism as a pseudo H-type Lie algebra. 
Hence  $\mathfrak H^{(1)}(\mathbb F,A)$ is  
isomorphic to 
$\mathfrak H^{(1)}(\mathbb F,1_{r,s})$. 

(2) We define a linear mapping $\psi$ of 
$\mathfrak h^{(1)}(\mathbb F,1_{r,s})$ to 
$\mathfrak h^{(1)}(\mathbb F,1_{s,r})$ as follows: 
$$\psi(x)=
xK_n \ (x\in \mathfrak h^{(1)}(\mathbb F,1_{r,s})_{-1}), \quad 
\psi(z)=-z\ (z\in \mathfrak h^{(1)}(\mathbb F,1_{r,s})_{-2})$$ 
Then $\psi$ is an isomorphism 
as a GLA. 
Moreover $\psi|\mathfrak h^{(1)}(\mathbb F,1_{r,s})_{-2}$ is isometry and 
$\psi|\mathfrak h^{(1)}(\mathbb F,1_{r,s})_{-1}$ is anti-isometry. 
Hence $\mathfrak H^{(1)}(\mathbb F,1_{r,s})$ is equivalent to 
$\mathfrak H^{(1)}(\mathbb F,1_{s,r})$. 
\end{proof}

\begin{remark}
The pseudo H-type Lie algebra 
$\mathfrak H^{(1)}(\mathbb F,1_{r,s})$ is isomorphic to 
$\mathfrak h'_{r,s}(\mathbb F)$ in 
\cite{KS17:1} as a GLA. 
\label{rem31}
\end{remark}

\subsection{ComH-type Lie algebras of the second and the third classes}
\label{sec31}
Let $\mathbb F$ be $\mathbb C$, $\mathbb C'$, $\mathbb H$, $\mathbb H'$, 
$\mathbb O$ or $\mathbb O'$. 
We set 
$$\mathfrak g_{-1}=(\mathbb F(\gamma)^c)^n, \qquad 
\mathfrak g_{-2}=\mathbb F^c,$$ 
where we assume $n=1$ in case $\mathbb F=\mathbb O$ or $\mathbb O'$. 
Let $A$ be a real symmetric matrix of order $n$ such that $A^2=1_n$ and $\sgn(A)=(r,s)$ $(r\geqq s)$. 
We define a bracket operation $[\cdot\,,\,\cdot]$ on 
$\mathfrak m=\mathfrak g_{-2}\oplus \mathfrak g_{-1}$ as follows: 
$$[\alpha,\beta]=\pr_2(\alpha A\beta^*)\quad 
(\alpha,\beta\in \mathfrak g_{-1}),\quad 
[\mathfrak g_{-1},\mathfrak g_{-2}]=[\mathfrak g_{-2},\mathfrak g_{-2}]=0.$$ 
More explicitly, the bracket operation can be written as follows: 
if we put $\alpha=\alpha_1+\alpha_2\ell$ and $\beta=\beta_1+\beta_2\ell$ 
$(\alpha_1,\alpha_2,\beta_1,\beta_2\in(\mathbb F^c)^n$), then 
$$[\alpha,\beta]=\alpha_2A\transpose{\beta}_1-\beta_2A\transpose{\alpha}_1.$$ 
Then $\mathfrak m$ becomes a complex FGLA of the second kind. 
Moreover we define a symmetric bilinear form $\innerproduct$ on 
$\mathfrak m$ as follows:
$$\begin{aligned}
& \inph{\alpha}{\beta}=
R(\alpha A\beta^*)\quad   
(\alpha,\  \beta\in \mathfrak g_{-1}), \\
& \inph{z_1}{z_2}=-\gamma R(\overline{z_1}{z_2})\quad (z_1,z_2\in\mathfrak g_{-2}),\quad \inph{\mathfrak g_{-1}}{\mathfrak g_{-2}}=0. 
\end{aligned}$$
More explicitly, the bilinear form can be written as follows: 
if we put 
$\alpha=\alpha_1+\alpha_2\ell$ and $\beta=\beta_1+\beta_2\ell$ 
$(\alpha_1,\alpha_2,\beta_1,\beta_2\in(\mathbb F^c)^n$), 
then 
$$\inph{\alpha}{\beta}=R(\alpha_1A{}^t\overline{\beta_1}-
\gamma\overline{\beta_2}A\transpose{\alpha}_2).$$
For $z\in\mathfrak g_{-2}$ 
the linear mapping $J_z$ of $\mathfrak g_{-1}$ to itself defined by  
$$\inph{J_z(x)}{y}=\inph{z}{[x,y]}\qquad (x,y\in\mathfrak g_{-1})$$ 
satisfies 
$$J_z(\alpha)=-(z\ell)\alpha,\qquad 
J_z^2=\gamma\overline{z}z1_{\mathfrak g_{-1}}.$$ 
We denote by the same letter $\tau$ the conjugations of $\mathbb F^c$ and $\mathbb F(\gamma)^c$  
with respect to $\mathbb F$ and $\mathbb F(\gamma)$ respectively. 
We now extend $\tau$ to a grade-preserving involution of $\mathfrak m$ 
in a natural way, which is also denoted by the same letter. 
Next we define a grade-preserving involution $\kappa$ of $\mathfrak m$ as follows: 
$$\kappa(\alpha)=-\overline{\alpha_2}-\overline{\alpha_1}\ell,\qquad 
\kappa(z)=-\overline{z}, $$
where $\alpha=\alpha_1+\alpha_2\ell\in\mathfrak g_{-1}$ 
$(\alpha_1,\alpha_2\in(\mathbb F^c)^n$, 
$z\in\mathfrak g_{-2}$). 
We denote by $\mathfrak n^1$ and $\mathfrak n^2$ the sets of elements which are fixed under $\tau$ and  $\kappa\comp\tau$ respectively. 
Then $\mathfrak n^1$ and $\mathfrak n^2$ become graded subalgebras of 
$\mathfrak m_\mathbb R$ with 
$$\mathfrak n^i=\bigoplus_{p<0}\mathfrak n^i_p,\qquad 
\mathfrak n^i_p=\mathfrak n^i\cap\mathfrak g_p.$$
Explicitly the subspaces $\mathfrak n^i_p$ are described as follows: 
$$\begin{aligned}
&\mathfrak n^1_{-1}=\mathbb F(\gamma)^n, \qquad 
\mathfrak n^1_{-2}=\mathbb F, \\
& \mathfrak n^2_{-1}=\{\alpha_1+\hat{\tau}(\alpha_1)\ell:
\alpha_1\in(\mathbb F^c)^n\}, \qquad 
\mathfrak n^2_{-2}=\sqrt{-1}\mathbb R\oplus\ima(\mathbb F),
\end{aligned}$$
where $\hat{\tau}$ is a mapping of $\mathbb F^c$ to itself defined 
by $\hat{\tau}(x)=-\tau(\overline{x})$. 
We note that the bracket operation and the scalar product on $\mathfrak n^2$ can be written as follows: 
if we put $\alpha=\alpha_1+\hat{\tau}(\alpha_1)\ell$ and 
$\beta=\beta_1+\hat{\tau}(\beta_1)\ell$ 
$(\alpha_1,\beta_1\in(\mathbb F^c)^n$), then 
$$\begin{aligned}
&[\alpha,\beta]=\hat{\tau}(\alpha_1)A\transpose{\beta}_1
-\hat{\tau}(\beta_1)A\transpose{\alpha}_1, \\ 
& \langle\alpha\mid\beta\rangle=
R(\alpha_1A\transpose{\overline{\beta_1}}-\gamma\tau(\beta_1)A
\transpose{\tau}(\overline{\alpha_1}))
=(1-\gamma)R(\alpha_1A\transpose{\overline{\beta_1}}).
\end{aligned}$$ 
We always assume that $\gamma=-1$ when we consider the case 
$\mathfrak n^2$. 
Since $z\overline{z}\in\mathbb R$ for $z\in\mathfrak n^i_{-2}$ $(i=1,2)$, 
$\mathfrak n^1$ and $\mathfrak n^2$ are pseudo H-type Lie algebras. 

\begin{enumerate}
\item Pseudo H-type Lie algebra $\mathfrak n^1$. 
We assume that $\gamma=-1$ if $\mathbb F=\mathbb C',\mathbb H',\mathbb O'$. 
The pseudo H-type Lie algebra $(\mathfrak n^1,\innerproduct)$ 
is called a comH-type Lie algebra of the second class, 
which is denoted by $\mathfrak H^{(2)}(\mathbb F,A,\gamma)
=(\mathfrak h^{(2)}(\mathbb F,A,\gamma),\innerproduct)$. 
We denote the FGLA associated with 
$\mathfrak H^{(2)}(\mathbb F,A,\gamma)$ 
by $\mathfrak h^{(2)}(\mathbb F,A,\gamma)=\bigoplus\limits_{p=-1}^{-2}
\mathfrak h^{(2)}(\mathbb F,A,\gamma)_p$ . 
Note that $\mathfrak h^{(2)}(\mathbb C,A,\gamma)$ becomes 
a complex FGLA.  

\item  Pseudo H-type Lie algebra $\mathfrak n^2$. 
We assume that $\mathbb F=\mathbb H$, $\mathbb H'$, $\mathbb O$ or 
$\mathbb O'$.  
The pseudo H-type Lie algebra $(\mathfrak n^2,\innerproduct)$ 
is called a comH-type Lie algebra of the third class, 
which is denoted by $\mathfrak H^{(3)}(\mathbb F,A)
=(\mathfrak h^{(3)}(\mathbb F,A),\innerproduct)$. 
We denote the FGLA associated with $\mathfrak H^{(3)}(\mathbb F,A)$ 
by 
$\mathfrak h^{(3)}(\mathbb F,A)=\bigoplus\limits_{p=-1}^{-2}
\mathfrak h^{(3)}(\mathbb F,A)_p$. 
\end{enumerate}
\begin{lemma} Let $\mathbb F$ be $\mathbb H$, $\mathbb H'$, $\mathbb O$ or $\mathbb O'$. 
Let $(r,s)$ be the signature of $A$ and $\gamma,\gamma'\in\{\pm1\}$. 
\begin{enumerate}
\item 
$\mathfrak H^{(2)}(\mathbb F,A,\gamma)$ (resp. 
$\mathfrak H^{(3)}(\mathbb F,A)$)  
is isomorphic to 
$\mathfrak H^{(2)}(\mathbb F,1_{r,s},\gamma)$ 
(resp. $\mathfrak H^{(3)}(\mathbb F,1_{r,s})$). 
\item    
$\mathfrak h^{(2)}(\mathbb F,A,\gamma')$ is  
isomorphic to 
$\mathfrak h^{(2)}(\mathbb F,1_{r+s},\gamma)$ 
as a GLA. 
\item $\mathfrak H^{(2)}(\mathbb F,1_{r,s},\gamma)$  
(resp. $\mathfrak H^{(3)}(\mathbb F,1_{r,s})$)
is equivalent to 
$\mathfrak H^{(2)}(\mathbb F,1_{s,r},\gamma)$ 
(resp. $\mathfrak H^{(3)}(\mathbb F,1_{s,r})$).   
\item 
$\mathfrak H^{(3)}(\mathbb H',1_{r,s})$ is isomorphic to 
$\mathfrak H^{(3)}(\mathbb H',1_{r+s})$. 
\end{enumerate}
\label{lem32}
\end{lemma}
\begin{proof}
As in Lemma \ref{lem31} we can prove (1) and (3).  

(2) There exists a real orthogonal matrix $P$ such that 
$PAP^{-1}=1_{r,s}$. We define a linear mapping of 
$\mathfrak h^{(2)}(\mathbb F,1_{r+s},\gamma')$ to 
$\mathfrak h^{(2)}(\mathbb F,A,\gamma)$ as follows: 
$$\varphi(\alpha_1+\alpha_2\ell)=
\alpha_1P+\alpha_21_{r,s}P\ell\quad(\alpha_1,\alpha_2\in \mathbb F^n), 
\qquad \varphi(z)=z
\quad(z\in\mathfrak h^{(2)}(\mathbb F,1_{r+s},\gamma')_{-2}).$$ 
Then $\varphi$ is an isomorphism 
as a GLA. 

(4) Let $(1,i',j',k')$ be the canonical basis of $\mathbb H'$. 
We define 
a linear mapping of 
$\mathfrak h^{(3)}(\mathbb H',1_{r+s})$ to 
$\mathfrak h^{(3)}(\mathbb H',1_{r,s})$ as follows: 
$$\begin{aligned}
\varphi(\alpha)& =\varphi_1(\alpha)+\hat{\tau}(\varphi_1(\alpha))\ell
\quad (\alpha\in\mathfrak h^{(3)}(\mathbb H',1_{r+s})_{-1}), \\
\varphi_1(\alpha)&=(\alpha_r,j'\alpha_s), \quad 
(\alpha=\alpha_1+\hat{\tau}(\alpha_1)\ell, \alpha_1=(\alpha_r,\alpha_s), 
\alpha_1\in (\mathbb H^{\prime c})^n, 
\alpha_r\in(\mathbb H^{\prime c})^r, \alpha_s\in(\mathbb H'^c)^s) \\
\varphi(z)&=z\quad (z\in\mathfrak h^{(3)}(\mathbb H',1_{r+s})_{-2}).
\end{aligned}$$ 
Then $\varphi$ is an isomorphism of 
$\mathfrak H^{(3)}(\mathbb H',1_{r+s})$ onto 
$\mathfrak H^{(3)}(\mathbb H',1_{r,s})$. 
\end{proof}
\begin{remark}
The comH-type Lie algebra 
$\mathfrak H^{(2)}(\mathbb F,1_{r+s},-1)$ isomorphic to 
$\mathfrak h_{r+s}(\mathbb F)$ in 
\cite{KS16:1} as a GLA. 
\label{rem32}
\end{remark}
\begin{remark} 
\begin{enumerate}
\fixitem  
Let $(\mathfrak n,\innerproduct)$ be a comH-type Lie algebra of 
the first class with $\sgn(A)=(r,s)$. We put $\dim_\mathbb R\mathbb F=a$. 
\begin{enumerate}
\renewcommand{\labelenumii}{(\alph{enumii}) }
\item 
If $\mathbb F=\mathbb C,\mathbb H,\mathbb O$, then 
$\sgn(\innerproduct_{-1})=(ra,sa)$ and 
$\sgn(\innerproduct_{-2})
=(a-1,0)$. 
\item 
If $\mathbb F=\mathbb C',\mathbb H',\mathbb O'$, then 
$\sgn(\innerproduct_{-1})
=((r+s)a/2,(r+s)a/2)$ and 
$\sgn(\innerproduct_{-2})
=(a/2-1,a/2)$. 
\end{enumerate}
\item 
Let $(\mathfrak n,\innerproduct)$ be a comH-type Lie algebra of 
the second class with $\sgn(A)=(r,s)$. 
\begin{enumerate}
\renewcommand{\labelenumii}{(\alph{enumii}) }
\item 
If $\mathbb F=\mathbb C,\mathbb H,\mathbb O$ and $\gamma=-1$, then 
$\sgn(\innerproduct_{-1})
=(2ra,2sa)$ 
and $\innerproduct_{-2})=(a,0)$. 
\item 
If $\mathbb F=\mathbb C,\mathbb H,\mathbb O$ and $\gamma=1$, then 
$\sgn(\innerproduct\rangle_{-1})
=((r+s)a,(r+s)a)$ 
and  $\sgn(\innerproduct\rangle_{-2})
=(0,a)$. 
\item If $\mathbb F=\mathbb C',\mathbb H',\mathbb O'$ and $\gamma=-1$, 
then 
$\sgn(\innerproduct_{-1})
=((r+s)a,(r+s)a)$ 
and $\sgn(\innerproduct\rangle_{-2})
=(a/2,a/2)$. 
\end{enumerate}
\item 
If $(\mathfrak n,\innerproduct)$ be a comH-type Lie algebra of 
the third class with $\sgn(A)=(r,s)$. 
\begin{enumerate}
\renewcommand{\labelenumii}{(\alph{enumii}) }
\item If $\mathbb F=\mathbb H$ or $\mathbb O$, then 
$\sgn(\innerproduct_{-1})
=((r+s)a,(r+s)a)$ and $\sgn(\innerproduct_{-2})=(a-1,1)$. 
\item If $\mathbb F=\mathbb H'$ or $\mathbb O'$, then 
$\sgn(\innerproduct_{-1})
=((r+s)a,(r+s)a)$ and $\sgn(\innerproduct_{-2})=(a/2-1,a/2+1)$. 
\end{enumerate}
\end{enumerate}
\end{remark}

\section{Prolongations of FGLAs associated with comH-type Lie algebras}
In this section we first matricial representations of comH-type Lie algebras of classical type and determine the prolongation. 
\subsection{General results}
Let $(\mathfrak n,\innerproduct)$ be a pseudo H-type Lie algebra, and 
let $\pcgla[n]{g}$ be the prolongation of $\mathfrak n$. 
The natural inclusion $\iota$ of 
$\mathfrak{so}(\mathfrak n_{-2},\innerproduct_{-2})$ into 
$\mathfrak g(\mathfrak n)_0$ is defined by 
$$[\iota(v\wedge u),x]=\frac14[J_v,J_u](x)\ 
(x\in\mathfrak n_{-1}),\quad 
[\iota(v\wedge u),z]=(v\wedge u)(z)\ 
(z\in\mathfrak n_{-2}),$$
where $v\wedge u$ is the skew-symmetric endomorphism 
$\inph{v}{\cdot}u-\inph{u}{\cdot}v$. 

Here we quote useful results from \cite{AS14:1} and \cite{AS14:2}. 
\begin{proposition}[{\cite{AS14:2}*{Theorem 2.3}}]
Let $(\mathfrak n,\innerproduct)$ be a pseudo H-type Lie algebra, 
and let $\pcgla[n]{g}$ be the prolongation of $\mathfrak n$. Then 
$$\mathfrak g(\mathfrak n)_0=\mathfrak{so}(\mathfrak n_{-2},\innerproduct_{-2})\oplus\mathbb RE\oplus \check{\mathfrak h}_0,$$
where $E$ is the characteristic element of the GLA $\pcgla[n]g$ and 
$\check{\mathfrak h}_0=\{~x\in\mathfrak g(\mathfrak n)_0:
[x,\mathfrak n_{-2}]=0~\}$.  
\label{prop41}
\end{proposition}

\begin{theorem}[{\cite{AS14:2}*{Theorem 3.1 and Remark 3.2}}]
Let $(\mathfrak n,\innerproduct)$ be a pseudo H-type Lie algebra with 
$\dim\mathfrak n_{-2}\geqq3$, and let $\pcgla[n]{g}$ be the prolongation of $\mathfrak n$. 
If $\mathfrak g(\mathfrak n)_1\ne0$, 
then $\pcgla[n]{g}$ is a finite dimensional SGLA. 
\label{th41}
\end{theorem}

Let $(\mathfrak n,\innerproduct)$ be a pseudo H-type Lie algebra with 
$\dim\mathfrak n_{-2}\geqq3$. 
Since a pseudo H-type Lie algebra is a real extended translation algebra, 
if the prolongation of $\mathfrak n$ is simple, 
then $\dim \mathfrak n_{-2}=3,4,7$ or $8$ 
(\cite{AS14:1}*{Theorem 3.6}). 
Hence by Theorem \ref{th41} we obtain the following

\begin{corollary}  \label{cor41}
Let $(\mathfrak n,\innerproduct)$ be a pseudo $H$-type Lie algebra 
with $\dim\mathfrak n_{-2}\geqq3$ and $\pcgla[n]g$ be the prolongation of 
$\mathfrak n$. 
If $\dim \mathfrak n_{-2}\ne 3,4,7,8$, then 
$\mathfrak g(\mathfrak n)_p=0$ for all $p\geqq1$. 
\end{corollary}
\subsection{Pseudo H-type Lie algebras with $\dim\mathfrak n_{-2}\leqq2$} 
\label{sec411}
Let $(\mathfrak n,\innerproduct)$ be a pseudo H-type Lie algebra. 
\begin{enumerate} 
\item The case $\dim\mathfrak n_{-2}=1$. 
The prolongation $\pcgla[n]g$ of $\mathfrak n$ is isomorphic to a real  contact algebra 
$K(N/2,\mathbb R)$, 
where $N=\dim\mathfrak n_{-1}$ (For the details of contact algebras, see \cite{Kac68:1}). 
\item The case $\dim\mathfrak n_{-2}=2$. 
Let $(z_1,z_2)$ be a basis of $(\mathfrak n_{-2},\innerproduct_{-2})$ such that 
$\inph{z_i}{z_j}=\varepsilon_i\delta_{ij}$, where $\varepsilon_i=\pm1$.  
We define an endomorphism $I$ of 
$\mathfrak n$ as follows: 
$$
I(x)=J_{z_1}J_{z_2}(x),\quad  I(z)= z_1\wedge z_2(z),$$
then $I$ satisfies 
$$I^2=-\varepsilon_1\varepsilon_21_{\mathfrak n}, 
\quad [Ix,y]=I[x,y], \quad \inph{Ix}{y}+\inph{x}{Iy}=0.$$ 
\begin{enumerate}
\renewcommand{\labelenumii}{(2\alph{enumii}) }
\item The case $\sgn(\innerproduct_{-2})=(2,0)$ or $(0,2)$. 

In this case $\varepsilon_1\varepsilon_2=1$ and hence $I$ is a complex structure of $\mathfrak n$. The complex structure on $\mathfrak n$ is naturally extended that on 
the prolongation $\pcgla[n]g$ of $\mathfrak n$, which is denoted by the same letter. 
Furthermore $\pcgla[n]g$ is isomorphic to the realization of a complex contact algebra $K(N/4,\mathbb C)$, 
where $N=\dim \mathfrak n_{-1}$.   
\item The case $\sgn(\innerproduct_{-2})=(1,1)$. 

In this case $\varepsilon_1\varepsilon_2=-1$ and hence $I$ is a paracomplex structure of $\mathfrak n$. The paracomplex structure $I$ on $\mathfrak n$ is naturally extended that on 
the prolongation $\pcgla[n]g$ of $\mathfrak n$, which is denoted by the same letter. 
We set 
$$\mathfrak g(\mathfrak n)^\pm_p=
\{X\in\mathfrak g(\mathfrak n)_p:[I,X]=\pm X\},
\quad 
\mathfrak g(\mathfrak n)^\pm=\bigoplus\limits_{p\in\mathbb Z}\mathfrak g(\mathfrak n)_p^\pm$$
Then the prolongation $\pcgla[n]g$ of 
the FGLA $\mathfrak n$ is 
the direct sum of $\mathfrak g(\mathfrak n)^+$ and $\mathfrak g(\mathfrak n)^-$. 
Furthermore 
$\mathfrak g(\mathfrak n)^\pm=\bigoplus_{p\in\mathbb Z}\limits\mathfrak g(\mathfrak n)^\pm_p$ are isomorphic to a contact algebra $K(N/4,\mathbb R)$. 
\end{enumerate}
\end{enumerate}
\subsection{Matricial models of comH-type Lie algebras of the first class}
Let $\mathbb F$ be $\mathbb C$, $\mathbb H$, 
$\mathbb C'$ or $\mathbb H'$. 
We put 
$\mathfrak l=
\mathfrak{sl}(n+2,\mathbb F)$ $(n\geqq1)$; 
then $\mathfrak l$ is a real semisimple Lie algebra. 

We put 
$
\mathfrak{s}=\{\,X\in\mathfrak{sl}(n+2,\mathbb F):
X^*S_{p,q}+S_{p,q}X=O\,\}$ $(n\geqq1,2p+q=n+2,p\geqq 1,q\geqq0)$; then 
$$\mathfrak{s}=\left\{
X=\begin{bmatrix}
X_{11} & X_{12} & X_{13} \\
X_{21} & X_{22} & -S_{p-1,q}X^*_{12} \\
X_{31} & -X^*_{21}S_{p-1,q} & -\overline{X_{11}} \\
\end{bmatrix}
:
\begin{aligned} 
& X_{11}\in \mathbb F,\ X_{12}\in M(1,n,\mathbb F), \\ 
& X_{21}\in M(n,1,\mathbb F), \\ 
&X_{31},X_{13}\in \ima \mathbb F, 
X_{22}\in \mathfrak{gl}(n,\mathbb F), \\
&X_{22}+S_{p-1,q}X^*_{22}S_{p-1,q}=O.
\end{aligned}\right\},
$$ 
Here $M(p,q,\mathbb F)$ denotes the set of $\mathbb F$-valued $p\times q$-matrices. 
We define subspaces $\mathfrak s_p$ of $\mathfrak s$ as follows: 
$$\begin{aligned}
&\mathfrak s_{-2}
 =\left\{~0_2\ominus 0_n \ominus X_{31}\in\mathfrak s:
X_{31}\in\ima \mathbb F
\right\}, \\
& 
\mathfrak s_{-1}=\left\{\begin{bmatrix}
0 & 0 & 0 \\
X_{21} & 0 & 0 \\
0 & -X^*_{21}S_{p-1,q} & 0 \\
\end{bmatrix}\in\mathfrak s: 
X_{21}\in M(n,1,\mathbb F)\right\}, \\
& 
\mathfrak s_{0}=\left\{
X_{11} \oplus X_{22}\oplus (-\overline{X_{11}}) 
\in\mathfrak s: 
\begin{aligned} 
& X_{11}\in \mathbb F, 
X_{22}\in \mathfrak{gl}(n,\mathbb F), \\
& X_{22}+S_{p-1,q}X^*_{22}S_{p-1,q}=O
\end{aligned}
\right\}, \\
& 
\mathfrak s_p=\{~X\in\mathfrak s:
{}^tX\in\mathfrak s_{-p}~\}\quad (p=1,2), 
\quad \mathfrak s_p=\{0\} \quad (|p|>2). 
\\
\end{aligned}$$
Then $\gla s$ becomes a GLA whose negative part $\mathfrak s_-$ is an FGLA of the second kind. 
We define a linear mapping of $\mathfrak h^{(1)}(\mathbb F,S_{p-1,q})$ into 
$\mathfrak s_-$ as follows: 
$$\varphi(x)=
\begin{bmatrix}
0 & 0 & 0 \\
x & 0 & 0 \\
0 & -x^*S_{p-1,q} & 0 \\
\end{bmatrix}
\quad (x\in \mathbb F^{p+q-1}),\quad \varphi(z)=\begin{bmatrix}
0 & 0 & 0 \\
0 & 0 & 0 \\
z & 0 & 0 \\
\end{bmatrix}\quad (z\in\mathfrak n_{-2});
$$
then $\varphi$ becomes a GLA isomorphism. 
We define a symmetric bilinear form $\innerproduct $ on $\mathfrak s_{-}$ as follows:  
$$\begin{aligned}
\inph{X}{Y}&=2\rea\tr(XS_{p,q}Y^*) \quad (X,Y\in\mathfrak s_{-1}),\quad 
\inph{X}{Y}=\rea\tr(XY^*) \quad (X,Y\in\mathfrak s_{-2}),\\ 
\inph{X}{Y}&=0\quad (X\in\mathfrak s_{-2},Y\in\mathfrak s_{-1})
\end{aligned}$$
Then $(\mathfrak s_{-},\innerproduct)$ becomes a pseudo H-type Lie algebra 
and $\varphi$ is isomorphism of $\mathfrak H^{(1)}(\mathbb F,S_{p-1,q})$ 
onto $(\mathfrak s_{-},\innerproduct)$. 
Since 
$\ad(\mathfrak s_0)|\mathfrak s_{-1}\subset\mathfrak{co}(\mathfrak s_{-1},\innerproduct_{-1})$, the CPSF $(\mathfrak s_-,[\innerproduct_{-1}])$ is of semisimple type. 

Let $\mathcal H^{(1)}$ be the set of all equivalence classes of comH-type Lie algebras of the first class. 
Let $\mathcal P^{(1)}$ be the set of all isomorphism classes (as a GLA) of real SGLAs 
$\gla s$ given in Table \ref{table1}. 
We define a mapping $\varPhi_1$ of $\mathcal H^{(1)}$ into $\mathcal P^{(1)}$ as follows: 
for $[(\mathfrak n,\innerproduct)]\in \mathcal H^{(1)}$ we define 
$\varPhi_1([(\mathfrak n,\innerproduct)])$ as the equivalence class 
of the prolongation of $(\mathfrak n,[\innerproduct_{-1}])$. 
More precisely $\varPhi_1$ is defined according to Table \ref{table1}.  
From the above results, \cite{Gom96:1}*{\S3} and \cite{Yat18:1}, $\varPhi_1$ is well-defined and surjective.  By Lemmas \ref{lem24} and \ref{lem31}, $\Phi_1$ is injective. 
Thus we obtain the following proposition. 
\begin{proposition} \label{prop42}
$\varPhi_1$ is bijective. 
\end{proposition} 
\begin{table}[hbtp]
\caption{First class}
\label{table1}
\begin{tabular}{|c|c|c|c|p{8cm}|}
\hline
$\mathbb F$ & $\sgn(\innerproduct_{-2})$ & $A$ & $\mathfrak s$ & 
the gradation of $\mathfrak s$ \\ \hline\hline 
$\mathbb C$ & $(1,0)$ & $S_{p-1,q}$ & $\mathfrak{su}(p+q,p) $ & 
$(\textrm{(AIIIa)}_{\ell,p},\{\alpha_1,\alpha_\ell\})$ 
{\tiny $(\ell=2p+q-1, p\geqq2,q\geqq1)$} \\ \hline
$\mathbb C$ & $(1,0)$ & $S_{p-1,0}$ & $\mathfrak{su}(p,p) $ & 
$(\textrm{(AIIIb)}_{\ell},\{\alpha_1,\alpha_\ell\})$ 
{\tiny $(\ell=2p-1, p\geqq2)$} \\ \hline
$\mathbb C$ & $(1,0)$ & $S_{0,q}$ & $\mathfrak{su}(1+q,1) $ & 
$(\textrm{(AIV)}_{\ell},\{\alpha_1,\alpha_\ell\})$ 
{\tiny $(\ell=q+1,q\geqq1)$} 
 \\ \hline
$\mathbb C'$ & $(0,1)$ & $S_{p-1,q}$ & $\mathfrak{sl}(2p+q,\mathbb R)$ & $(\textrm{(AI)}_\ell,\{\alpha_1,\alpha_\ell\})$ \\ \hline
$\mathbb H$ &  $(3,0)$ & $S_{p-1,q}$ & $\mathfrak{sp}(p+q,p)$ & $(\textrm{(CIIa)}_{\ell,p},\{\alpha_2\})$ 
{\tiny $(\ell=2p+q\geqq3,p,q\geqq1)$}, \\ \hline 
$\mathbb H$ &  $(3,0)$ & $S_{p-1,0}$ & $\mathfrak{sp}(p,p)$ &
$(\textrm{(CIIb)}_{\ell},\{\alpha_2\})$ 
{\tiny ($\ell=2p\geqq3$)} 
  \\ \hline
$\mathbb H'$ & $(1,2)$ & $S_{p-1,q}$ & $\mathfrak{sp}(2p+q,\mathbb R)$& $(\textrm{(CI)}_\ell,\{\alpha_2\})$ 
{\tiny $(\ell=2p+q\geqq3)$} \\ \hline
$\mathbb O$ & $(7,0)$ & $1$ & $\mathrm{FII}$ & $(\mathrm{FII},\{\alpha_4\})$ \\ \hline
$\mathbb O'$ &  $(3,4)$ & $1$ &$\mathrm{FI}$ & $(\mathrm{FI},\{\alpha_4\})$ \\ \hline
\end{tabular}
\end{table}

\subsection{Matricial Models of comH-type Lie algebras of the second class} 
Let $\mathbb F=\mathbb C, \mathbb C',\mathbb H,\mathbb H'$. 
Let $\gla s$ be a finite dimensional semisimple GLA $\mathfrak{sl}(n+2,\mathbb F)$ with the the following gradation $(\mathfrak s_p)_{p\in\mathbb Z}$. 

$$\begin{aligned}
&\mathfrak s_{-2}
 =\{~0\ominus 0_n\ominus X_{31}\in\mathfrak s:
X_{31}\in \mathbb F~\}, \\
&\mathfrak s_{-1}
 =\left\{\begin{bmatrix}
0 & 0 & 0   \\
X_{21} & 0 & 0 \\
0& X_{32} & 0  \\
\end{bmatrix}\in\mathfrak s:
X_{21}\in M(n,1,\mathbb F), X_{32}\in M(1,n,\mathbb F)
\right\}, \\
\end{aligned}$$
Note that $\gla s$ is an SGLA except for the case $\mathbb F=\mathbb C'$. 
We consider an FGLA $\mathfrak H^{(2)}(\mathbb F,A,\gamma)$.
That is, 
$$\mathfrak h^{(2)}(\mathbb F,A,\gamma)_{-1}=\mathbb F(\gamma)^n,\quad
\mathfrak h^{(2)}(\mathbb F,A,\gamma)_{-2}=\mathbb F,$$
where $A$ is a real symmetric matrix of order $n$ such that $A^2=1_n$. 
We define a linear mapping $\varphi$ of $\mathfrak h^{(2)}(\mathbb F,A,\gamma)$ to $\mathfrak s_-$ 
as follows:
$$\varphi(\alpha_1+\alpha_2\ell)=
\begin{bmatrix}
0 & 0 & 0  \\
{}^t\alpha_1 & 0 & 0   \\
0& \alpha_2A & 0  \\
\end{bmatrix},\qquad 
\varphi(z)=0_1\ominus 0_n\ominus z.$$
Then $\varphi$ is a GLA isomorphism. 
Moreover we define a nondegenerate symmetric bilinear form 
on $\mathfrak g_-$ as follows: 
$$\begin{aligned}
& \inph{X}{Y}=\rea(\transpose{x_{21}}A\overline{y_{21}}
-\gamma x_{32}Ay_{32}^*)
, \\
& \inph{Z}{W}=-\gamma\rea(z_{31}\overline{w_{31}})
\quad (Z,W\in\mathfrak s_{-2}),\quad \inph{\mathfrak s_{-1}}{\mathfrak s_{-2}}=0,  
\end{aligned}$$
The negative part of $\gla s$ equipped with this scalar product becomes 
a pseudo H-type Lie algebra which is isomorphic to 
$\mathfrak H^{(2)}(\mathbb F,A,\gamma)$ as a pseudo H-type Lie algebra. 

\begin{enumerate}
\renewcommand{\labelenumi}{\textbf{Case \arabic{enumi}:} }
\item  
$\mathbb F=\mathbb C$. $\mathfrak s$ is equal to $\mathfrak{sl}(n+2,\mathbb C)_\mathbb R$. Hence the GLA $\gla s$ is a finite dimensional SGLA of type $(A_{\ell},\{\alpha_1,\alpha_\ell\})$ $(\ell=2n+1)$. 
If $\gamma=-1$ (resp. $\gamma=1$), then the signature of $\inph{\cdot}{\cdot}_{-2}$ is $(2,0)$ (resp. $(0,2)$). 

\item 
$\mathbb F=\mathbb C'$. Since $\mathbb C'$ is isomorphic to $\mathbb R\oplus 
\mathbb R$ as a $\mathbb R$-algebra, 
$\mathfrak s$ is isomorphic to 
$\mathfrak{sl}(n+2,\mathbb R)\times \mathfrak{sl}(n+2,\mathbb R)$. 
Hence the GLA $\gla s$ is a semisimple GLA of type 
$((\mathrm{AI})_{\ell},\{\alpha_1,\alpha_{\ell}\})\times( (\mathrm{AI})_\ell,\{\alpha_1,\alpha_{\ell}\})$, where $\ell=n+1$. 
The signature of $\inph{\cdot}{\cdot}_{-2}$ is $(1,1)$. 
\item
$\mathbb F=\mathbb H$. 
The GLA $\gla s$ is a finite dimensional SGLA of type $((\mathrm{AII})_{\ell},\{\alpha_2,\alpha_{\ell-1}\})$, 
where $\ell=2n+3$. 
If $\gamma=-1$ (resp. $\gamma=1$), then the signature of $\inph{\cdot}{\cdot}_{-2}$ is $(4,0)$ (resp. $(0,4)$). 
\item 
$\mathbb F=\mathbb H'$. 
Since $\mathbb H'$ is isomorphic to $M_2(\mathbb R)$ as a $\mathbb R$-algebra, 
$\mathfrak g$ is isomorphic to $\mathfrak{sl}(2n+2,\mathbb R)$. 
Hence the GLA $\gla s$ is a finite dimensional SGLA of type $((\mathrm{AI})_{\ell},\{\alpha_2,\alpha_{\ell-1}\})$, where $\ell=2n+3$. 
The signature of $\inph{\cdot}{\cdot}_{-2}$ is $(2,2)$. 
\end{enumerate}
Let $\mathcal H^{(2)}$ be the set of all equivalence classes of comH-type algebras 
$(\mathfrak n,\innerproduct)$ of 
the second class with $\dim\mathfrak n_{-2}\geqq3$. 
Let $\mathcal P^{(2)}$ be the set of all isomorphism classes (as a GLA) 
of real SGLAs given in Table \ref{table2}. 
We define a mapping $\varPhi_2$ of $\mathcal H^{(2)}$ into $\mathcal P^{(2)}$ 
as follows: 
for $[(\mathfrak n,\innerproduct)]\in \mathcal H^{(2)}$ we define 
$\varPhi_2([(\mathfrak n,\innerproduct)])$ by the equivalence class 
of the prolongation of $\mathfrak n$. 
From the above results, \cite{AS14:1}*{Theorem 3.6}, \cite{Gom96:1}*{\S3}, 
$\varPhi_2$ is well-defined and surjective. By \cite{AS14:1}*{Theorem 3.6}, 
$\varPhi_2$ is also injective. 
\begin{proposition} \label{prop43}
$\varPhi_2$ is bijective and the correspondence follows from Table \ref{table2}. 
\end{proposition} 
\begin{table}[hbtp]
\caption{Second class}
\label{table2}
\begin{tabular}{|c|c|c|c|c|c|}\hline
$\mathbb F$ & $\gamma$ & $\sgn(\innerproduct_{-2})$ & $A$ & $\mathfrak s$ & the gradation \\ \hline\hline
$\mathbb H$ &  $-1$ & $(4,0)$ & $S_{n,0}$ & $\mathfrak{sl}(n+2,\mathbb H)$ & 
$(\mathrm{(AII)}_\ell,\{\alpha_2,\alpha_{\ell-1}\})$ 
{\scriptsize $(\ell=2n+3)$} \\ \hline
$\mathbb H$ &  $1$ & $(0,4)$ & $S_{n,0}$ & $\mathfrak{sl}(n+2,\mathbb H)$ & 
$(\mathrm{(AII)}_\ell,\{\alpha_2,\alpha_{\ell-1}\})$ 
{\scriptsize $(\ell=2n+3)$} 
\\ \hline
$\mathbb H'$ & $-1$  & $(2,2)$ & $S_{n,0}$ & $\mathfrak{sl}(2n+4,\mathbb R)$ & 
$(\mathrm{(AI)}_\ell,\{\alpha_2,\alpha_{\ell-1}\})$ 
{\scriptsize ($\ell=2n+3)$} 
\\ \hline
$\mathbb O$  & $-1$ & $(8,0)$ & $1$ & $\mathrm{EIV}$ & 
$(\mathrm{EIV},\{\alpha_1,\alpha_6\})$ \\ \hline
$\mathbb O$  & $1$ & $(0,8)$ & $1$ & $\mathrm{EIV}$ & 
$(\mathrm{EIV},\{\alpha_1,\alpha_6\})$
\\ \hline
$\mathbb O'$ &  $-1$ & $(4,4)$ & $1$ & $\mathrm{EI}$& 
$(\mathrm{EI},\{\alpha_1,\alpha_6\})$ \\ \hline
\end{tabular}
\end{table}
\subsection{Matricial models of comH-type Lie algebras of the third class}\label{sec44}
Let $\mathfrak s$ be the simple Lie algebra $\mathfrak{su}(p+q,p)$ of rank $\ell=2p+q-1$. 
We define subspaces $\mathfrak s_p$ of $\mathfrak s$ as follows: 
$$\begin{aligned}
&\mathfrak s_{-2}
 =\{~
0_2 \ominus  0_{\ell-3} \ominus X_{31} 
\in\mathfrak s:X_{31}\in\mathfrak{gl}(2,\mathbb C),
X_{31}^*=-K_2X_{31}K_2
~\}, \\
&\mathfrak s_{-1}
=\left\{\begin{bmatrix}
0  & 0 &0 \\
X_{31}  & 0 &0 \\
0 & -X_{31}^*S_{p-2,q}  & 0  \\
\end{bmatrix}\in\mathfrak s:
X_{31}\in M(\ell-3,1,\mathbb C)\right\}, \\
& 
\mathfrak s_{0}=\left\{
X_{11} \oplus X_{22} \oplus(-K_2X_{11}^*K_2) \in\mathfrak s: 
\begin{aligned} 
& X_{11}\in\mathfrak{gl}(2,\mathbb C), 
X_{22}\in \mathfrak{gl}(\ell-3,\mathbb C), \\
& X_{22}+S_{p-2,q}X^*_{22}S_{p-2,q}=O
\end{aligned}
\right\}, \\
& 
\mathfrak s_p=\{~X\in\mathfrak s:
{}^tX\in\mathfrak s_{-p}~\}\quad (p=1,2), 
\quad \mathfrak s_p=\{0\} \quad (|p|>2). 
\\
\end{aligned}$$
\begin{lemma}\label{lem43}
Let $B$ be the Killing form on a finite dimensional SGLA $\gla g$ and $\sigma$ an involutive automorphism 
of $\mathfrak g$ such that $\sigma(\mathfrak g_p)=\mathfrak g_{-p}$. 
For a nonzero real number $\delta$ we define a nondegenerate symmetric 
bilinear form $(\cdot\mid \cdot)_\delta$ 
on $\mathfrak g$ as follows: 
$$(X\mid Y)_\delta=\delta B(\sigma(X),Y)$$
For $Z\in \mathfrak g_{-2}$ we define an endomorphism $J_Z$ of $\mathfrak g_{-1}$
by 
$$(J_Z(X)\mid Y)_\delta=(Z\mid [X,Y])_\delta\quad (X,Y\in\mathfrak g_{-1}).$$
Then 
$$J_Z=\ad(Z)\comp \sigma,\quad J_Z^2=\ad([Z,\sigma(Z)]).$$
\end{lemma}
\begin{proof} For $X,Y\in\mathfrak g_{-1}$
$$\begin{aligned}
(J_Z(X)\mid Y)_\delta&=(Z\mid [X,Y])_\delta=\delta B(\sigma(Z), [X,Y])
=\delta B(Z,\sigma([X,Y]) 
=\delta B(Z,[\sigma(X),\sigma(Y)]) \\
&=\delta B([Z,\sigma(X)],\sigma(Y))
=\delta B(\sigma[Z,\sigma(X)],Y)=([Z,\sigma(X)]\mid Y)_\delta
\end{aligned}$$
Hence $J_Z=\ad(Z)\comp \sigma$. Moreover 
$$J_Z^2=\ad(Z)\comp\sigma\comp\ad(Z)\comp\sigma=
\ad(Z)\comp\sigma^2\comp\ad(\sigma (Z))=\ad([Z,\sigma(Z)]). \qedhere$$
\end{proof}
\subsubsection{The case of signature $(1,3)$}
Let $\gla s$ be as in \S\ref{sec44}. 
If the negative part of $\gla s$ has the structure of a pseudo H-type Lie algebra, 
then a minimal admissible $\Cl(1,3)$ module is of dimensional 8(\cite{Cia00:1}*
{Theorem 3.2}). 
Since $\mathfrak s_{-1}$ is a 
$\Cl(\mathfrak s_{-2},\innerproduct_{-2})$-module, 
we get $4(2p+q-4)\equiv 0$ $\pmod{8}$, 
so $q$ is a even number.  

Now we assume that $p\geqq3$, $q=0$. 
We set 
$Q=J_2 \oplus  J_{p-2} \oplus J_2$ 
and 
define an involutive automorphism $\sigma$ of $\mathfrak s$ as follows:
$$\sigma(X)=Q\transpose{X}Q\qquad (X\in\mathfrak s).$$
Moreover we define a nondegenerate symmetric bilinear form on $\mathfrak s$ by 
$$\inph{X}{Y}=-\frac1{4p}B(\sigma(X),Y)\quad (X,Y\in\mathfrak s)$$
For $Z\in\mathfrak g_{-2}$ let $J_Z$ be the mapping 
of $\mathfrak g_{-1}$ to itself defined by 
$$\inph{J_Z(X)}{Y}=\inph{Z}{[X,Y]}\qquad (X,Y\in\mathfrak s_{-1}).$$ 
By Lemma \ref{lem43}, we obtain that 
$$J_Z^2(X)=-\inph{Z}{Z}X.$$ 
Then $(\mathfrak s_-,\inph{\cdot}{\cdot})$ becomes a pseudo H-type 
Lie algebra. 
Note that the restriction of $\inph{\cdot}{\cdot}$ on 
$\mathfrak s_{-2}$ has the signature $(1,3)$ and $\gla g$ 
is a finite dimensional SGLA of type 
$(\mathrm{(AIIIb)}_{\ell},\{\alpha_2,\alpha_{\ell-1}\})$, where $\ell=2p-1$. Moreover $(\mathfrak s_-,\inph{\cdot}{\cdot})$ is isomorphic to 
$\mathfrak H^{(3)}(\mathbb H',K_{p-2})$ as a pseudo H-type Lie algebra. 

\subsubsection{The case of signature $(3,1)$}
Let $\gla s$ be as in \S\ref{sec44}. 
If the negative part of $\gla s$ has the structure of a pseudo H-type Lie algebra, 
then $q$ is an even number. Indeed, since a minimal admissible $\Cl(3,1)$-module is of dimension 8 (\cite{Cia00:1}*{Theorem 3.2}),  
we get $4(2p+q-4)\equiv 0$ $\pmod{8}$, 
so $q$ is a even number.  

Now we assume that $p-2=2n$, $q=2m$, $m\geqq1$. 
We set 
$T=(-J_2) \oplus J_{p-2} \oplus I_{q} \oplus(-J_{p-2}) \oplus J_2$ and 
define an involutive automorphism $\sigma$ of $\mathfrak s$ as follows:
$$\sigma(X)=T\transpose{X}T\qquad (X\in\mathfrak g).$$
Moreover we define a nondegenerate symmetric bilinear form on $\mathfrak s$ by 
$$\inph{X}{Y}=-\frac1{4(2p+q)}B(\sigma(X),Y)\quad (X,Y\in\mathfrak s)$$
For $Z\in\mathfrak s_{-2}$ let $J_Z$ be the mapping 
of $\mathfrak s_{-1}$ to itself defined by 
$$\inph{J_Z(X)}{Y}=\inph{Z}{[X,Y]}\qquad (X,Y\in\mathfrak s_{-1}).$$ 
By Lemma  \ref{lem43}, we obtain that 
$$J_Z^2=-\inph{Z}{Z}1_{\mathfrak s_{-1}}.$$ 
Note that the signature of the restriction of $\innerproduct$ to 
$\mathfrak s_{-2}$ is $(3,1)$ and 
$\gla s$ is a finite dimensional SGLA of type $(\mathrm{(AIIIa)}_{\ell,p},\{\alpha_2,\alpha_{\ell-1}\})$, where $\ell=2p+q-1$. 
Moreover $(\mathfrak s_-,\inph{\cdot}{\cdot})$ is isomorphic to $\mathfrak H^{(3)}(\mathbb H,S_{n,m})$ as 
a pseudo H-type algebra. 

Let $\mathcal H^{(3)}$ be the set of all equivalence classes of comH-type algebras 
$(\mathfrak n,\innerproduct)$ of the third class. 
Let $\mathcal P^{(3)}$ be the set of all isomorphism classes (as a GLA) 
of real SGLAs given in Table \ref{table3}. 
We define a mapping $\varPhi_3$ of $\mathcal H^{(3)}$ into $\mathcal P^{(3)}$ 
as follows: 
for $[(\mathfrak n,\innerproduct)]\in \mathcal H^{(3)}$ we define 
$\varPhi_3([(\mathfrak n,\innerproduct)])$ by the equivalence class 
of the prolongation of $\mathfrak n$. 
From the above results, \cite{AS14:1}*{Theorem 3.6}, \cite{Gom96:1}*{\S3}, 
$\varPhi_2$ is well-defined and surjective. By \cite{AS14:1}*{Theorem 3.6}, 
$\varPhi_2$ is also injective. 
\begin{proposition} \label{prop44}
$\varPhi_3$ is bijective and the correspondence follows from Table \ref{table3}. 
\end{proposition} 
\begin{table}[hbtp]
\caption{Third class}
\label{table3}
\begin{tabular}{|c|c|c|c|c|}\hline
$\mathbb F$ & $\sgn(\innerproduct_{-2})$ & $A$ & $\mathfrak s$ & the gradation \\ \hline\hline
$\mathbb H$ & $(3,1)$ & $S_{n,m}$ 
& $\mathfrak{su}(q+p,p)$  & 
$(\mathrm{(AIIIa)}_{\ell,p},\{\alpha_2,\alpha_{\ell-1}\})$  
\\ 
 &   & {\footnotesize $(m\geqq1)$} 
& {\footnotesize $(p=2n-2,q=2m)$} & 
{\footnotesize $(\ell=2p+q-1)$} \\ \hline
$\mathbb H'$ & $(1,3)$ & $S_{n,0}$ 
& $\mathfrak{su}(p,p)$ & $(\mathrm{(AIIIb)}_{\ell},\{\alpha_2,\alpha_{\ell-1}\})$ \\ 
 & & {\footnotesize $(n\geqq1)$}& {\footnotesize $(p=2n-2)$} &  
{\footnotesize $(\ell=p+1)$}\\ \hline
$\mathbb O$ & $(7,1)$ & $1$ & $\mathrm{EIII}$ & $(\mathrm{EIII},\{\alpha_1,\alpha_6\})$ \\ \hline
$\mathbb O'$ & $(3,5)$ & $1$ & $\mathrm{EII}$ & $(\mathrm{EII},\{\alpha_1,\alpha_6\})$ \\ \hline
\end{tabular}
\end{table}
\subsection{Summary} 
Since a pseudo H-type algebra is an extended translation algebra, 
the prolongation of a pseudo H-type algebra is an SGLA appeared in the table in 
\cite{AS14:1}*{Theorem 3.6}. Comparing Tables 1--3 and the table in 
\cite{AS14:1}*{Theorem 3.6}, we obtain the following theorem.  
\begin{theorem} 
\begin{enumerate}
\fixitem Let $(\mathfrak n,\innerproduct)$ be a comH-type Lie algebra. Then 
$\mathfrak n$ is isomorphic to the negative part of some finite dimensional real semisimple GLA $\gla s$. In particular, if $\dim \mathfrak n_{-2}\ne2$, then $\mathfrak s$ is simple.  
\item 
Let $(\mathfrak n,\innerproduct)$ be a pseudo H-type Lie algebra and 
$\pcgla[n]g$ be the prolongation of $\mathfrak n$. 
Assume that: 
\textup{(i)} $\dim \mathfrak n_{-2}\geqq3$ and $\mathfrak g(\mathfrak n)_1\ne0$; 
\textup{(ii)} $\sgn(\innerproduct_{-2})\ne (1,3),(3,1)$. 
Then $\mathfrak n$ is isomorphic to some comH-type Lie algebra 
as a GLA. 
\end{enumerate}
\label{th42}
\end{theorem}
\section{Pseudo H-type Lie algebras satisfying the  $J^2$-condition}
Let $(\mathfrak n,\innerproduct)$ be a pseudo H-type Lie algebra. 
For any $x\in \mathfrak n_{-1}$ with $\inph{x}{x}\ne0$ we set 
$$J_{\mathfrak n_{-2}}(x)=
\{\,J_{z}(x):z\in\mathfrak n_{-2}\,\},\qquad 
\mathfrak n_{-1}(x)=\mathbb Rx+J_{\mathfrak n_{-2}}(x);
$$
then $\mathfrak n_{-1}(x)$ is a nondegenerate subspace of $\mathfrak n_{-1}$ 
with respect to $\innerproduct$. 
We say that  $(\mathfrak n,\innerproduct)$ satisfies the $J^2$ condition if 
for any $z\in \mathfrak n_{-2}$ and any $x\in \mathfrak n_{-1}$ with $\inph{x}{x}\ne0$, $\mathfrak n_{-1}(x)$ is $J_z$-stable. 
Clearly if $\dim \mathfrak n_{-2}=1$, then 
$(\mathfrak n,\innerproduct)$ satisfies the $J^2$-condition. 
If a pseudo H-type Lie algebra $(\mathfrak n,\innerproduct)$ 
is equivalent to a pseudo H-type Lie algebra $(\mathfrak n',\innerproduct')$  satisfying the $J^2$ condition, 
then $(\mathfrak n,\innerproduct)$ also satisfies one. 
For the cases $\mathbb F=\mathbb C,\mathbb C',\mathbb H,\mathbb H'$ it is clear that 
a comH-type Lie algebra $\mathfrak H^{(1)}(\mathbb F,S)$ of the first class satisfies $J^2$-condition. 
For the cases $\mathbb F=\mathbb O,\mathbb O'$ it follows from the following lemma. 
\begin{lemma} Let $(\mathfrak n,\innerproduct)$ be a pseudo H-type algebra. 
If $\dim\mathfrak n_{-2}+1=\dim\mathfrak n_{-1}$, then 
$(\mathfrak n,\innerproduct)$ satisfies the $J^2$ condition. 
\end{lemma} 
\begin{proof} 
Let $z_1,z_2$ be elements of $\mathfrak n_{-2}$ such that $\inph{z_1}{z_2}=0$ 
and let $x$ be an element of $\mathfrak n_{-1}$ such that $\inph{x}{x}\ne0$. 
Then $(\Ker\ad x|\mathfrak n_{-1})$ is a nondegenerate subspace of 
$(\mathfrak n_{-1},\innerproduct_{-1})$ and 
$$\mathfrak n_{-1}=(\Ker\ad x|\mathfrak n_{-1})\oplus (\Ker\ad x|\mathfrak n_{-1})^\perp.$$
We define a linear mapping $\varphi$ of $\mathfrak n_{-2}$ into 
$J_{\mathfrak n_{-2}}(x)$ 
as follows: 
$$\varphi(z)=J_z(x)\quad (z\in\mathfrak n_{-2}).$$
Then $\varphi$ is a linear isomorphism. 
Since $J_{\mathfrak n_{-2}}(x)=(\Ker\ad x|\mathfrak n_{-1})^\perp$ and since 
$\dim\mathfrak n_{-2}+1=\dim\mathfrak n_{-1}$, we get 
$\Ker\ad(x)|\mathfrak n_{-1}=\mathbb Rx$. 
Since $J_{z_1}J_{z_2}=-J_{z_2}J_{z_1}$, 
$$\inph{J_{z_1}J_{z_2}x}{x}= 
-\inph{J_{z_2}x}{J_{z_1}x}=\inph{x}{J_{z_2}J_{z_1}x}=-\inph{x}{J_{z_1}J_{z_2}x},$$
so $\inph{J_{z_1}J_{z_2}x}{x}=0$. This means that $J_{z_1}J_{z_2}x\in J_{\mathfrak n_{-2}}(x)$. Hence there exists an element $z_3$ of $\mathfrak n_{-2}$ such that 
$J_{z_1}J_{z_2}x=J_{z_3}x$. 
\end{proof}
Conversely we prove that a pseudo H-type algebra satisfying $J^2$ condition is isomorphic to a comH-type algebra of the first class. 
Let $(\mathfrak n,\innerproduct)$  be a pseudo H-type Lie algebra satisfying the 
$J^2$ condition. For $x\in \mathfrak n_{-1}$ with $\inph{x}{x}\ne0$ 
we set $\mathcal A_x=\mathbb R\times \mathfrak n_{-2}$; then $\mathcal A_x$ 
is a real vector space. We define a multiplicative operation $\underset{x}{*}$ on $\mathcal A_x$ as follows:  for $(\lambda_1,z_1),(\lambda_2,z_2)\in \mathcal A_x$, we put 
$$(\lambda_1,z_1)\underset{x}{*}(\lambda_2,z_2)=(\lambda_3,z_3),$$
where $(\lambda_3,z_3)$ is defined by 
$$(\lambda_11_{\mathfrak n_{-1}}+J_{z_1})(\lambda_2 1_{\mathfrak n_{-1}}+J_{z_2})x=(\lambda_3 1_{\mathfrak n_{-1}}+J_{z_3})x.$$ 
Then $(\mathcal A_x,+,\underset{x}{*})$ is an algebra over $\mathbb R$. 
We define an endomorphism $s$ of $\mathcal A_x$ as follows:  
$$s(\lambda,z)=(\lambda,-z);$$
then $s$ is an anti-involution of $\mathcal A_x$ and satisfies 
$$(\lambda,z)+s(\lambda,z)=(2\lambda,0)\in\mathbb R, 
\quad (\lambda,z)\underset{x}{*}s(\lambda,z)=(\lambda^2+\inph{z}{z},0)\in\mathbb R.$$ 
We define $N:\mathcal A_x\to\mathbb R$ as follows: 
$$N(\lambda,z)= (\lambda,z)\underset{x}{*}s(\lambda,z);$$ 
then $N$ is a non-degenerate quadratic form on $\mathcal A_x$ and hence 
$(\mathcal A_x,s)$ becomes a Cayley algebra. 

Furthermore we can prove that $\mathcal A_x$ becomes an alternative algebra and hence a normed algebra. By Hurwitz theorem (\cite{Har90:1}*{Theorem 6.37}), $\mathcal A_x$ is isomorphic to one of 
$\mathbb R,\mathbb C,\mathbb C',\mathbb H,\mathbb H',\mathbb O,\mathbb O'$ 
as a Cayley algebra. 
However since $\mathfrak n_{-2}\ne0$, $\mathcal A_x$ is not isomorphic to $\mathbb R$. 
Also the Cayley algebra $\mathcal A_x$ does not depend on the choice 
of the element $x$. 

We choose elements $x_1,\dots,x_{r+s}$ of $\mathfrak n_{-1}$ satisfying the following conditions: 
$$\begin{aligned}
&\inph{x_i}{x_i}=1\quad  (i=1,\dots,r),\qquad \inph{x_j}{x_j}=-1\quad (j=r+1,\dots,r+s), \\
&
\inph{\mathfrak n_{-1}(x_i)}{\mathfrak n_{-1}(x_j)}=0\quad  (i\ne j), \qquad 
\mathfrak n_{-1}=\mathfrak n_{-1}(x_1)\oplus\cdots\oplus\mathfrak n_{-1}(x_{r+s}).
\end{aligned}$$ 
In particular, if $\mathcal A_{x_i}$ is isomorphic to $\mathbb O$ or $\mathbb O'$ for some $i$, then $r+s=1$. 
We denote by $\mathbb F$ the Cayley algebra $\mathcal A_{x_1}$. 
We define a linear mapping $\varphi$ of $\mathfrak n$ to 
$\mathfrak h^{(1)}(\mathbb F,1_{r,s})=\mathbb F^{r+s}\oplus \ima \mathbb F$ as follows: 
$$\varphi\left(\sum_{i=1}^{r+s}(\lambda_ix_i+J_{z_i}(x_i))\right)
=((\lambda_1,z_1),\dots,(\lambda_{r+s},z_{r+s})) 
\ (\lambda_i\in\mathbb R,z_i\in\mathfrak n_{-2}),\quad 
\varphi(z)=-z \ (z\in\mathfrak n_{-2}). 
$$
Then $\varphi$ is an isomorphism as a pseudo H-type Lie algebra. 

\begin{theorem}
Let $(\mathfrak n,\innerproduct)$ be a pseudo H-type Lie algebra. 
The following three conditions are mutually equivalent: 
\begin{enumerate}
\renewcommand{\labelenumi}{\textup{(\roman{enumi})} }
\item $(\mathfrak n,\innerproduct)$ satisfies the $J^2$-condition; 
\item $(\mathfrak n,\innerproduct)$ is equivalent to a comH-type Lie algebra of the first class;
\item The CPSF associated with  $(\mathfrak n,\innerproduct)$ is of semisimple type. 
\end{enumerate}
In this case, the prolongation of $(\mathfrak n,[\innerproduct_{-1}])$ is an SGLA  
whose complexification is simple. 
\label{th51}
\end{theorem}
\begin{proof}
The equivalence (i) $\Leftrightarrow$ (ii) is obtained from the above results. 
The implication (ii) $\Rightarrow$ (iii) follows from Proposition \ref{prop42}. 
Finally we prove the implication (iii) $\Rightarrow$ (ii).  
We assume $(\mathfrak n,\innerproduct)$ satisfies the condition (iii). 
From the classification of the prolongations of 
CPSFs of semisimple type, the prolongation of 
$(\mathfrak n,[\innerproduct_{-1}])$ is isomorphic to 
the prolongation of the CPSF associated 
with some comH-type Lie algebra of the first class. 
Thus (iii) $\Rightarrow$ (ii) follows from Proposition \ref{prop22}.  
\end{proof}

\section{Prolongations of CPSFs associated with pseudo H-type Lie algebras}
Let $(\mathfrak n,\innerproduct)$ and $\pcgla[n]g$ be as in Proposition \ref{prop41}.  Moreover let $\gla g$ be the prolongation of $(\mathfrak n,[\innerproduct_{-1}])$. We define subspaces $\mathfrak h_0$, $\check{\mathfrak h}_0^a$ and 
$\check{\mathfrak h}_0^s$ of $\mathfrak g(\mathfrak n)_0$ as follows:  
$$\begin{aligned}
& \mathfrak h_0=\check{\mathfrak h}_0\cap\mathfrak g_0, \\
& \check{\mathfrak h}_0^a=
\{~D\in\check{\mathfrak h}_0:\inph{[D,x]}{y}+\inph{x}{[D,y]}=0
\quad \text{for all}\ x,y\in \mathfrak n_{-1}~\}, \\
& \check{\mathfrak h}_0^s=
\{~D\in\check{\mathfrak h}_0:\inph{[D,x]}{y}-\inph{x}{[D,y]}=0
\quad \text{for all}\ x,y\in \mathfrak n_{-1}~\}, \\
\end{aligned}$$
\begin{proposition}  
Under the above assumptions, 
$$\check{\mathfrak h}_0=\check{\mathfrak h}_0^a\oplus \check{\mathfrak h}_0^s,\quad 
\mathfrak h_0=\check{\mathfrak h}_0^a, \quad 
\mathfrak g_0=\mathfrak{so}(\mathfrak n_{-2},\innerproduct_{-2})\oplus\mathbb RE\oplus \check{\mathfrak h}_0^a$$ 
\label{prop61}
\end{proposition}
\begin{proof}
Since $D^\top\in\check{\mathfrak h}_0$ for 
$D\in\check{\mathfrak h}_0$, we get 
$\check{\mathfrak h}_0=\check{\mathfrak h}_0^a\oplus\check{\mathfrak h}_0^s$, so $\mathfrak h_0=\check{\mathfrak h}_0^a$. From Proposition \ref{prop41} 
the last assertion is obvious. 
\end{proof}
\begin{lemma}
Let $(\mathfrak n,\innerproduct)$ be a pseudo H-type Lie algebra, 
and let $\gla{g}$ be the prolongation of $(\mathfrak n,[\innerproduct_{-1}])$. 
For $p\geqq1$, the condition ``$x\in\mathfrak g_p$ and 
$[x,\mathfrak g_{-2}]=0$'' implies $x=0$. 
\label{lem61}
\end{lemma}
\begin{proof}
We identify $\mathfrak h_0$ with a subspace of $\mathfrak{gl}(\mathfrak n_{-1})$.  
For a subspace $\mathfrak a$ of $\mathfrak{gl}(\mathfrak n_{-1})$ 
we denote by $\rho^{(k)}(\mathfrak a)$ the $k$-th (algebraic) prolongation of 
$\mathfrak a$. 
By Proposition \ref{prop61}, 
$\mathfrak h_0\subset \mathfrak{so}(\mathfrak n_{-1},\innerproduct_{-1})$; 
hence 
$\rho^{(1)}(\mathfrak h_0)\subset\rho^{(1)}(\mathfrak{so}(\mathfrak n_{-1},\innerproduct_{-1}))=0$. 
The lemma is proved. 
\end{proof}
\begin{theorem} \label{th61}
Let $(\mathfrak n,\innerproduct)$ be a pseudo H-type algebra with $\dim\mathfrak n_{-2}\geqq3$. Let $\gla g$ be the prolongation of $(\mathfrak n,[\innerproduct_{-1}])$. 
If $\mathfrak g_1\ne0$, then $\gla g$ is simple. 
\end{theorem} 
\begin{proof} The proof is a variant on the proof of Proposition 3.3 in \cite{AS14:2}. 
Let $\mathfrak r$ be the radical of $\mathfrak g$ and $\pcgla[n]g$ be the prolongation of $\mathfrak n$. 
We assume that there exists a positive integer $k$ such that 
$\mathcal D^k\mathfrak r\ne0$ and $\mathcal D^{k+1}\mathfrak r=0$, 
where $(\mathcal D^r\mathfrak r)_{r\in\mathbb Z_{\geqq1}}$ is the derived series of $\mathfrak r$. 
Then $\mathcal D^k\mathfrak r$ is a commutative graded ideal of $\mathfrak g$; 
$\mathcal D^k\mathfrak r=\bigoplus\limits_{p\in\mathbb Z}\mathfrak q_p$, 
$\mathfrak q_p=\mathcal D^k\mathfrak r\capprod \mathfrak g_p$. 
Since the $\mathfrak g_0$-module $\mathfrak g_{-2}$ is irreducible (Proposition \ref{prop61}), 
we get $\mathfrak q_{-2}=0$ or $\mathfrak q_{-2}=\mathfrak n_{-2}$. 
Since $\mathfrak n$ is nondegenerate, if $\mathfrak q_{-2}=0$, then 
$\mathfrak q_{-1}=0$. By transitivity, we see that $\mathcal D^k\mathfrak r=0$,  
which is a contradiction. 
Hence $\mathfrak q_{-2}=\mathfrak n_{-2}$. 
If $\mathfrak q_{-1}=0$, then 
$[\mathfrak g_1,\mathfrak g_{-2}]\subset \mathfrak q_{-1}=0$. 
By Lemma \ref{lem61}, 
we get $\mathfrak g_1=0$, which is a contradiction. 
If $\mathfrak q_{-1}=\mathfrak g_{-1}$, then 
$\mathfrak g_{-2}=[\mathfrak q_{-1},\mathfrak q_{-1}]\subset \mathcal D^{k+1}\mathfrak r=0$, 
which is a contradiction. 
Hence $\mathfrak q_{-1}$ is a proper subspace of $\mathfrak n_{-1}$. 
For every non-isotropic $z\in\mathfrak n_{-2}$ 
there exists a grade-preserving automorphism $\psi_z$ of $\mathfrak g(\mathfrak n)$ such that 
$\psi_z(s)=J_z(s)$ for all $s\in \mathfrak n_{-1}$ (\cite{AS14:1}*{Proposition 2.6}). 
Since $\psi_z(D)\in \mathfrak g_0$ for any $D\in\mathfrak g_0$,  
$\psi_z$ induces a grade-preserving automorphism of $\mathfrak g$, which is denoted by the same letter. 
Since $\mathfrak r$ is a characteristic ideal of $\mathfrak g$, 
$\psi_z(\mathfrak q_{-1})=\mathfrak q_{-1}$ for all non-isotropic $z\in\mathfrak n_{-2}$, so 
$\mathfrak q_{-1}$ is a $\Cl(\mathfrak n_{-2},\innerproduct_{-2})$-submodule of 
$\mathfrak n_{-1}$. 
It follows that $\mathfrak q_{-1}$ is a $\innerproduct_{-1}$-isotropic 
$\Cl(\mathfrak n_{-2},\innerproduct_{-2})$-submodule of $\mathfrak n_{-1}$ and that 
$$\mathfrak q^\perp_{-1}=\{~s\in\mathfrak g_{-1}:
\inph{s}{\mathfrak q_{-1}}=0~\}
=\{~s\in\mathfrak g_{-1}:[s,\mathfrak q_{-1}]=0~\}$$
is a proper $\Cl(\mathfrak n_{-2},\innerproduct_{-2})$-submodule of $\mathfrak n_{-1}$ 
containing $\mathfrak q_{-1}$. 

Let $\mathfrak a$ be a $\Cl(\mathfrak n_{-2},\innerproduct_{-2})$-submodule of $\mathfrak n_{-1}$ which is the complementary to $\mathfrak q_{-1}^\perp$. 
Since the restriction $\eta$ of $\innerproduct_{-1}$ to $\mathfrak q_{-1}\times \mathfrak a$ is nondegenerate, 
we obtain the following decomposition of $\Cl(\mathfrak n_{-2},\innerproduct_{-2})$-submodules
$$\mathfrak n_{-1}=\mathfrak q_{-1}\oplus \mathfrak b\oplus \mathfrak a,$$
where 
$$\mathfrak b=\{~s\in\mathfrak g_{-1}:
\inph{s}{\mathfrak a+\mathfrak q_{-1}}=0~\}
=\{~s\in\mathfrak g_{-1}:[s,\mathfrak a+\mathfrak q_{-1}]=0~\}.$$
Let $\xi$ (resp. $\zeta$) be the restriction of $\innerproduct_{-1}$ to 
$\mathfrak a\times \mathfrak a$ (resp. $\mathfrak b\times \mathfrak b$). 
We denote by 
$$\eta^\flat:\mathfrak q_{-1}\to\mathfrak a^*,\quad 
\xi^\flat:\mathfrak a\to\mathfrak a^*,\quad 
\zeta^\flat:\mathfrak b\to\mathfrak b^*$$
the induced linear mappings defined by
$$\langle\eta^\flat(s),a\rangle=\inph{s}{a}\ (s\in\mathfrak q_{-1},a\in\mathfrak a), \ 
\langle\xi^\flat(s),a\rangle=\inph{s}{a}\ (s,a\in\mathfrak a), \ 
\langle\zeta^\flat(s),a\rangle=\inph{s}{a}\ (s,a\in\mathfrak b).$$ 
Then $\eta^\flat$ is a linear isomorphism and 
$$\eta^\flat\comp\psi_z=-\psi_z^*\comp\eta^\flat, \quad
\xi^\flat\comp\psi_z=-\psi_z^*\comp\xi^\flat, \quad
\zeta^\flat\comp\psi_z=-\psi_z^*\comp\zeta^\flat$$
for any non-isotropic $z\in\mathfrak n_{-2}$. 
This implies that the linear mapping 
$$\varphi=(\eta^\flat)^{-1}\comp \xi^\flat:\mathfrak a\to\mathfrak q_{-1}$$
is a $\Cl(\mathfrak n_{-2},\innerproduct_{-2})$-module homomorphism and 
that the $\Cl(\mathfrak n_{-2},\innerproduct_{-2})$-module 
$\mathfrak a'=\{~2a-\varphi(a):a\in\mathfrak a~\}$ is 
$\innerproduct_{-1}$-isotropic and the complementary to $\mathfrak q_{-1}^\perp$. 
Replacing $\mathfrak a$ with $\mathfrak a'$, if necessary, we may assume that 
$\mathfrak a$ is $\innerproduct_{-1}$-isotropic and commutative. 

Let $\varPhi$ be a nondegenerate bilinear form on $\mathfrak a$ 
such that 
$$\varPhi(J_zs,t)=\tau\varPhi(s,J_zt)=\sigma\varPhi(t,J_zs),$$
where $s,t\in\mathfrak a$, $z\in\mathfrak n_{-2}$, $\tau,\sigma\in\{\pm1\}$, 
$\tau\sigma=-1$. 
Such a $\varPhi$ does exist (see the proof of \cite{AS14:1}*{Theorem 3.6} and \cite{AC97:1}). 
We denote by 
$$\varPhi^\flat:\mathfrak a\to\mathfrak a^*$$
the induced linear mapping defined by 
$$\langle\varPhi^\flat(s),a\rangle=\varPhi(s,a)\quad (s,a\in\mathfrak a).$$ 
We define a linear mapping $\chi$ of $\mathfrak n$ into itself as follows: 
$$\chi|\mathfrak n_{-2}=1_{\mathfrak n_{-2}},\quad 
\chi|\mathfrak a=(\eta^\flat)^{-1}\comp \varPhi^\flat, \quad 
\chi|\mathfrak b=1_{\mathfrak b}, \quad 
\chi|\mathfrak q_{-1}=(\varPhi^\flat)^{-1}\comp \eta^\flat.$$ 
Then $\chi$ is a grade-preserving automorphism of $\mathfrak n$ and 
is isometry. Moreover 
$\chi$ is naturally extended to a grade-preserving automorphism of $\pcgla[n]g$, which is denoted by the same letter. Since $\chi$ is isometry, $\chi(\mathfrak g)=\mathfrak g$. 
Therefore $\chi$ induces a grade-preserving automorphism of $\gla g$, which is denoted by the same letter.  However since 
$\mathfrak a=\chi(\mathfrak q_{-1})\subset \mathcal D^k\mathfrak r$, we reach a contradiction. Thus we see that $\gla g$ is semisimple. 
\end{proof}

\begin{theorem} \label{th62}
Let $(\mathfrak n,\innerproduct)$ be a pseudo H-type Lie algebra, 
and let $\gla g$ be the prolongation of the associated CPSF 
$(\mathfrak n,[\innerproduct_{-1}])$. 
\begin{enumerate}
\item If $\dim \mathfrak n_{-2}=1$, then 
$\gla g$ is one of finite dimensional SGLAs of types 
$$(\mathrm{(AI)}_\ell,\{\alpha_1,\alpha_\ell\}), 
(\mathrm{(AIIIa)}_{\ell,p},\{\alpha_1,\alpha_\ell\}),  
(\mathrm{(AIIIb)}_\ell,\{\alpha_1,\alpha_\ell\}),  
(\mathrm{(AIV)}_\ell,\{\alpha_1,\alpha_\ell\}).$$ 
\item If $\dim \mathfrak n_{-2}=2$, then $\mathfrak g_1=0$. 

\item Assume that $\dim\mathfrak n_{-2}\geqq3$. 
If $\mathfrak g_1\ne0$, then 
$\gla g$ is a finite dimensional SGLA and coincides with the prolongation of $\mathfrak n$.  
Furthermore for 
$\mathfrak g_1$ to be nonzero, it is necessary and sufficient 
that $(\mathfrak n,\innerproduct)$ is a comH-type Lie algebra of the first class. Consequently, if $\mathfrak g_1$ is nonzero, then 
$\gla g$ is an SGLA of one of the following types: 
$$(\textup{(CIIa)}_{\ell,p},\{\alpha_2\}), 
(\textup{(CIIb)}_{\ell},\{\alpha_2\}),  
(\textup{(CI)}_\ell,\{\alpha_2\}), 
 (\mathrm{FII},\{\alpha_4\}), (\mathrm{FI},\{\alpha_4\})$$
\end{enumerate}
\end{theorem}
\begin{proof}
(1) Since $\dim \mathfrak n_{-2}=1$, the pseudo H-type Lie algebra $\mathfrak n$ 
satisfies the $J^2$-condition. Hence (1) follows from Theorem \ref{th51} and 
Proposition \ref{prop42}. 

(2) If $\mathfrak g$ is semisimple, then $\dim\mathfrak g_{-2}\ne2$ 
(Theorem \ref{th51}). Hence $\mathfrak g$ is not semisimple. 
Now we assume that $\sgn(\innerproduct_{-2})=(1,1)$. 
We use the notation in \ref{sec411} (2b). 
By Proposition \ref{prop61},  
$$\mathfrak g_0=\mathbb RE\oplus\mathbb RI\oplus\mathfrak h_0$$
and 
$$\mathfrak h_0=\{~D-D^\top:D\in\mathfrak g(\mathfrak n)^+_0\cap 
\check{\mathfrak h}_0~\},$$
where $D^\top$ is the adjoint of $D$ with respect to $\innerproduct$. 
Indeed, an element $D\in \mathfrak h_0$ is decomposed as follows: 
$D=D_1+D_2$, $D_1\in\mathfrak g(\mathfrak n)^+_0$, 
 $D_2\in\mathfrak g(\mathfrak n)^-_0$. Since 
$D=-D^\top$, we get $D_2=-D_1^\top$. 
 Since $\mathfrak g(\mathfrak n)^\pm$ are contact algebras, 
the correspondence $D\mapsto D_1$ induces an isomorphism of the ideal $\mathfrak h_0$ of $\mathfrak g_0$ onto $\mathfrak{sp}(\mathfrak n^+_{-1})$. 
Therefore the $\mathfrak g_0$-module $\mathfrak g_{-1}$ is completely reducible 
and the semisimple part of $\mathfrak g_0$ coincides with $\mathfrak h_0$. 
Let $\mathfrak r$ be the radical of $\mathfrak g$; 
then $\mathfrak r$ is a graded ideal of $\gla g$: 
$\gla r$, $\mathfrak r_p=\mathfrak g_p\capprod \mathfrak r$. 
Since the $\mathfrak g_0$-module $\mathfrak g_{-1}$ is completely reducible, 
there exists a graded Levi subalgebra $\gla s$ of $\mathfrak g$ 
such that $\mathfrak s_p=\mathfrak g_p$ for $p\geqq1$, 
$[\mathfrak r_0,\mathfrak s_{-1}]=0$ and $[\mathfrak r_{-1},\mathfrak s_1]=0$; 
then $\mathfrak h_0\subset \mathfrak s_0$.  
Now we assume that $\mathfrak g_1\ne0$ and 
$\{0\}\subsetneqq\mathfrak r_{-1}\subsetneqq\mathfrak g_{-1}$. 
Since $\mathfrak g_1\ne0$, the $\mathfrak g_0$-modules 
$\mathfrak g(\mathfrak n)^\pm_{-1}$ are not isomorphic. 
Therefore we may assume that $\mathfrak s_{-1}=\mathfrak g(\mathfrak n)^+_{-1}$ and 
$\mathfrak r_{-1}=\mathfrak g(\mathfrak n)^-_{-1}$. 
Let $\mathfrak a$ be a semisimple ideal of $\mathfrak g_0$ such that 
$[\mathfrak a,\mathfrak s_{-1}]=0$. 
Then $\mathfrak a\subset \mathfrak h_0$ and hence 
$\mathfrak a\subset \mathfrak g(\mathfrak n)^-_{-1}\capprod \mathfrak h_0=0$. 
Thus we get $\mathfrak s_0=[\mathfrak s_{-1},\mathfrak s_1]$. 
Since $[\mathfrak g(\mathfrak n)^-_{-1},\mathfrak s_1]=0$, we obtain 
$\mathfrak s_1\subset \mathfrak g(\mathfrak n)^+_{1}$. 
We see that 
$$\mathfrak h_0\subset\mathfrak s_0=[\mathfrak s_{-1},\mathfrak s_1]\subset 
\mathfrak g(\mathfrak n)^+_{0},$$
and hence 
$$[\mathfrak h_0,\mathfrak g(\mathfrak n)^-_{-1}]\subset 
[\mathfrak g(\mathfrak n)^+_{0},\mathfrak g(\mathfrak n)^-_{-1}]=0,$$
which is a contradiction. 
Hence $\mathfrak r_{-1}=0$ or $\mathfrak r_{-1}=\mathfrak g_{-1}$. 
If $\mathfrak r_{-1}=0$, then 
$$\mathfrak g_{-2}=[\mathfrak g_{-1},\mathfrak g_{-1}]
=[\mathfrak s_{-1},\mathfrak s_{-1}]=\mathfrak s_{-2},$$ 
which is a contradiction. 
Finally we assume $\mathfrak r_{-1}=\mathfrak g_{-1}$. 
Since $\dim\mathfrak s_{-1}=\dim\mathfrak s_{1}$, we obtain 
$\mathfrak g_1=\mathfrak s_1=0$. 

In case $\sgn(\innerproduct_{-2})=(2,0)$ or $(0,2)$ we can prove 
$\mathfrak g_1=0$ similarly by considering the complexification. 

(3) Assume that $\dim\mathfrak n_{-2}\geqq3$ and $\mathfrak g_1\ne0$. 
Then $\mathfrak g(\mathfrak n)_1\ne0$. 
By Theorem \ref{th61}, $\mathfrak g$ is simple. 
For $p>0$ we see 
$$\dim \mathfrak g_p=\dim \mathfrak g_{-p}=\dim \mathfrak g(\mathfrak n)_{-p}
=\dim \mathfrak g(\mathfrak n)_{p}$$
and hence 
$$\mathfrak g_p=\mathfrak g(\mathfrak n)_{p},\quad 
\mathfrak g_0=[\mathfrak g_1,\mathfrak g_{-1}]
=[\mathfrak g(\mathfrak n)_1,\mathfrak g(\mathfrak n)_{-1}]=\mathfrak g(\mathfrak n)_0.$$
The second and last assertions follow from Theorem \ref{th51} and Table 1. 
\end{proof}

\section{Einstein spaces associated with pseudo H-type Lie algebras}
Let $(\mathfrak g,\innerproduct)$ be a metric Lie algebra. 
Here a metric Lie algebra means a Lie algebra with a scalar product (a nondegenerate symmetric bilinear form). 
Let $G$ be a simply connected Lie group with Lie algebra $\mathfrak g$ and let $g$ be the left-invariant pseudo-riemannian metric induced by 
$\innerproduct$. 
Let $\nabla$ be the Levi-Civita connection on $(G,g)$.  
For $X,Y\in\mathfrak g$, the Koszul formula gives 
$$\nabla_XY=\frac12([X,Y]-(\ad Y)^*X-(\ad X)^*Y), $$
where $(\ad X)^*$ is the adjoint operator of $\ad X$ with respect to $\innerproduct$. 
The curvature tensor is defined by 
$$R(X,Y)Z=\nabla_X\nabla_YZ-\nabla_Y\nabla_XZ-\nabla_{[X,Y]}Z.$$
For $X,Y\in\mathfrak g$ we define the Ricci curvature $\ric^\mathfrak g$ 
by 
$$\ric^\mathfrak g(X,Y)=\tr(Z\mapsto R(Z,X)Y).$$
The Ricci operator $\Ric^\mathfrak g:\mathfrak g\to\mathfrak g$ defined by
$$\langle \Ric^\mathfrak g(X)\mid Y\rangle=\ric^\mathfrak g(X,Y).$$
A metric Lie algebra $(\mathfrak g,\innerproduct)$ is called an algebraic Ricci soliton if 
$$\Ric^\mathfrak g=\lambda 1_\mathfrak g+D,$$
where $\lambda\in\mathbb R$, $D\in\Der(\mathfrak g)$. 
In particular, an algebraic Ricci soliton on a nilpotent Lie algebra is called a nilsoliton. 
\subsection{Ricci tensor}
Let $(\mathfrak n,\innerproduct)$ be a pseudo H-type Lie algebra. 
By a pseudo $H$-type Lie group $(\mathfrak n,\innerproduct)$ we mean the corresponding simply connected Lie group $N$ equipped with the induced left-invariant pseudo-riemannian metric $g$. 
By \cite{Rya21:1}*{Theorems 3.5 and 3.6} and \cite{BO13:1}*{p.13}, we obtain 
\begin{theorem} \label{th71}
Let $(\mathfrak n,\innerproduct)$ be a pseudo $H$-type Lie algebra. 
The Ricci operator preserves the decomposition 
$\mathfrak n=\mathfrak n_{-1}\oplus\mathfrak n_{-2}$, 
and is given on each factor by 
$$\Ric^\mathfrak n\mid_{\mathfrak n_{-1}}=
-\frac12(\dim\mathfrak n_{-2})1_{\mathfrak n_{-1}},\qquad 
\Ric^\mathfrak n\mid_{\mathfrak n_{-2}}=\frac14(\dim\mathfrak n_{-1})1_{\mathfrak n_{-2}}$$
Furthermore 
the scalar curvature is constant on $N$, and is given by 
$$\scc^\mathfrak n=-\frac14(\dim \mathfrak n_{-1})(\dim \mathfrak n_{-2}).$$
 \end{theorem}
\subsection{Nilsolitons} 
Let $(\mathfrak n,\innerproduct)$ be a pseudo H-type Lie algebra. 
We put 
$$c=\frac14\dim\mathfrak n_{-1}+\dim\mathfrak n_{-2}=\frac14\tr((\ad E)^2),$$
where $E$ is the characteristic element of the prolongation of the FGLA 
$\mathfrak n$. 
By Theorem \ref{th71}, $D=\Ric^\mathfrak n+c1_\mathfrak n$ is a derivation. 
More precisely, 
$$D=\Ric^\mathfrak n+c1_\mathfrak n=\xi E,\quad 
\xi:=-\frac14\left(\dim\mathfrak n_{-1}+2\dim\mathfrak n_{-2}\right)=\frac14\tr(\ad E).$$
Hence we obtain the following proposition:
\begin{proposition}[\cite{Rya21:1}*{Corollary 4.12}]
Every pseudo H-type Lie algebra is a nilsoliton. 
\end{proposition}
\begin{lemma} If 
$\Ric^\mathfrak n=\lambda 1_{\mathfrak n}+D'$, $D'\in\Der(\mathfrak n)$, 
then $\lambda=-c$ and $D'=D=\ad(\xi E)$. 
\end{lemma}
\begin{proof} 
For $u,v\in\mathfrak n_{-1}$, 
$$D'([u,v])=\Ric^\mathfrak n([u,v])-\lambda[u,v]=
\left(\frac14\dim\mathfrak n_{-1}-\lambda\right)[u,v]$$
Since $D'$ is a derivation, 
$$D'[u,v]=[D'u,v]+[u,D'v]=
[\Ric^\mathfrak nu,v]+[u,\Ric^\mathfrak nv]-2\lambda[u,v]
=(-\dim\mathfrak n_{-2}-2\lambda)[u,v]$$
Hence 
$$\lambda=-\frac14\dim\mathfrak n_{-1}-\dim\mathfrak n_{-2}=-c$$
and $D'=D=\ad(\xi E)$. 
\end{proof}

\subsection{Extensions of nilsolitons}
Let $(\mathfrak n,\innerproduct)$ be a pseudo H-type Lie algebra and 
$\gla g$ be the prolongation of $\mathfrak n$. 

Moreover let $\mathfrak a$ be a maximal subalgebra of $\mathfrak g$ 
satisfying the following condition: 
(i) $\mathfrak a$ contains $E$ and is commutative; (ii) $\mathfrak a$ is $\mathbb R$-diagonalizable. 
Clearly $\mathfrak a\subset\mathfrak g_0$. 
Let  $\mathfrak a'$ be the centralizer of $\mathfrak g_0$ in $\mathfrak a$.  
We denote by $\mathfrak a''$ the set of all self-adjoint elements of 
$\mathfrak a'$ with respect to $\innerproduct$. 
Here $H\in\mathfrak a'$ is said to be self-adjoint with respect to 
$\innerproduct$ if  $\langle\ad H(X)\mid Y\rangle=
\langle X\mid\ad H(Y)\rangle$ for all $X,Y\in\mathfrak n$. 
By Propositions \ref{prop41} and \ref{prop61}, 
$\mathfrak a''=\mathbb RE\oplus \mathfrak a''_0$, 
$\mathfrak a''_0=\mathfrak a'\cap \check{\mathfrak h}_0^s$. 
Recall that $\End(\mathfrak n)$ has a natural symmetric bilinear form defined 
by 
$$(X,Y)_{\tr}=\tr(X\comp Y)\quad (X,Y\in\End(\mathfrak n)).$$ 
We denote by $\mathfrak a_\mathfrak n$ a maximal 
subspace of $\mathfrak a''$ satisfying the following two conditions: 
(i) $E\in\mathfrak a_\mathfrak n$; (ii) the restriction of $(\cdot\mid\cdot)_{\tr}$ to $\ad\mathfrak a_\mathfrak n$ is nondegenerate. 
\begin{remark} If $\gla g$ is the prolongation of the CPSF $(\mathfrak n,[\innerproduct_{-1}])$, 
then $\mathfrak a''=\mathbb RE$.  
\end{remark}
We now assume that 
the prolongation $\gla g$ of $\mathfrak n$ is a finite dimensional SGLA . 
Following Tamaru \cite{Tam11:1}*{\S3} we describe the gradation of $\mathfrak g$ by using the restricted root system. 
There exists a Cartan decomposition $\mathfrak g=\mathfrak k+\mathfrak p$ 
such that $\mathfrak a\subset \mathfrak p$. 
This Cartan decomposition defines an involutive automorphism $\sigma$ of 
$\mathfrak g$, which is called a Cartan involution, such that $\sigma(E)=-E$ and $\sigma(\mathfrak g_p)=\mathfrak g_{-p}$. 
The Cartan involution $\sigma$ defines a positive definite inner product $B_\sigma$ on $\mathfrak g$ by 
$$B_\sigma(X,Y)=-B(X,\sigma(Y)),\qquad X,Y\in\mathfrak g.$$ 
In the usual way, $\mathfrak a$ defines the restricted root system 
$\varDelta=\varDelta(\mathfrak g,\mathfrak a)$ with respect to 
$\mathfrak a$. 
Denoting by $\mathfrak g^\alpha$ the root space of a root $\alpha$, we obtain 
the root space decomposition 
$$\mathfrak g=\mathfrak g^0+\sum_{\alpha\in\varDelta}\mathfrak g^\alpha,
$$
where $\mathfrak g^0$ is the centralizer of $\mathfrak a$ in $\mathfrak g$. 
Let $(h_1,\dots,h_r)$ be a basis of $\mathfrak a$ such that $h_1=E$. 
We introduce the lexicographic ordering of $\varDelta\cup\{0\}$ with respect to the basis $(h_1,\dots,h_r)$. 
Let $\varDelta^+$ be the set of positive roots of $\varDelta$ with respect to this ordering. 
For  $H_1,H_2\in\mathfrak a''$  
$$(\ad H_1,\ad H_2)_{\tr}
=\sum_{\alpha\in\varDelta^+}(\dim\mathfrak g^\alpha)\alpha(H_1)\alpha(H_2)=\frac12 B_\sigma(H_1,H_2).$$
Thus we see that  $(\cdot\mid\cdot)_{\tr}$ is positive definite on 
$\ad \mathfrak a''$. In this case $\mathfrak a''=\mathfrak a_\mathfrak n$. 

Returning to the general case, we set $\mathfrak s=\mathfrak n\oplus \mathfrak a_\mathfrak n$ and define a nondegenerate symmetric bilinear form $\innerproduct_{\mathfrak s}$ on 
$\mathfrak s$ by 
$$\begin{aligned} 
\scalar{X}{Y}_\mathfrak s&=\scalar{X}{Y}\quad (X,Y\in\mathfrak n),\quad 
\scalar{X}{A}_\mathfrak s=0\quad (X\in\mathfrak n,A\in\mathfrak a_\mathfrak n),\\ 
\scalar{A}{B}_\mathfrak s&=\frac1c(\ad A,\ad B)_{\tr}\quad (A,B\in\mathfrak a_\mathfrak n), 
\end{aligned}$$
where for $A\in\mathfrak a_{\mathfrak n}$ 
we denote by $\ad A$ an endomorphism $\ad_\mathfrak s(A)|_\mathfrak n$ of $\mathfrak n$.  
Then $\mathfrak n=[\mathfrak s,\mathfrak s]$ and $\mathfrak a_\mathfrak n=\mathfrak n^\perp$. 
The solvable metric Lie algebra $(\mathfrak s,\innerproduct_\mathfrak s)$ 
constructed above is called a pseudo-Iwasawa extension of the pseudo H-type Lie algebra $(\mathfrak n,\innerproduct)$.  
We call $H_0\in\mathfrak s$ the mean curvature vector of 
$(\mathfrak s,\innerproduct_\mathfrak s)$ if 
$\scalar{H_0}{X}_\mathfrak s=\tr(\ad X)$ for all $X\in\mathfrak s$. 
By \cite{CR22:1}*{Lemma 1.15}, $H_0\in\mathfrak a_\mathfrak n$. 
By \cite{CR22:1}*{Lemma 3.8}, the Ricci tensor $\ric^\mathfrak s$ 
of $(\mathfrak s,\innerproduct_\mathfrak s)$ and its restriction to $\mathfrak n$ are related by: 
$$\begin{aligned} 
\ric^\mathfrak s(X,Y)&=\ric^\mathfrak n(X,Y)-\scalar{[H_0,X]}{Y} 
\quad (X,Y\in\mathfrak n), \\
\ric^\mathfrak s(X,A)&=0\quad 
(X\in\mathfrak n, A\in\mathfrak a_\mathfrak n),\\
\ric^\mathfrak s(A,B)&
=-(\ad A,\ad B)_{\tr}\quad 
(A,B\in\mathfrak a_\mathfrak n).
\end{aligned}$$
By using the first equation, for $X,Y\in\mathfrak n$, we get 
$$\begin{aligned}
-c\scalar{X}{Y}&=\scalar{\Ric^\mathfrak n(X)}{Y}
-\scalar{DX}{Y} 
=\scalar{\Ric^\mathfrak s(X)}{Y}+\scalar{\ad(H_0)X}{Y}-\scalar{DX}{Y} \\
&=\scalar{(\Ric^\mathfrak s\mid_\mathfrak n-D-\ad(H_0))X}{Y}.
\end{aligned}$$
By using the third  equation, for $A,B\in\mathfrak a_\mathfrak n$ we get 
$$-c\scalar{A}{B}=-(\ad A,\ad B)_{\tr}=\ric^\mathfrak s(A,B).$$
Hence 
$$\Ric^{\mathfrak s}\mid_\mathfrak n=D-\ad H_0-c1_\mathfrak n,\quad 
\Ric^{\mathfrak s}\mid_{\mathfrak a_\mathfrak n}
=-c1_{\mathfrak a_\mathfrak n}.$$
By \cite{CR22:1}*{the proof of Theorem 2.1, page 88}, 
$$(\Ric^\mathfrak n,\ad A)_{\tr}=0\quad \text{for all} \ A\in\mathfrak a_\mathfrak n. $$
Hence we see that 
$$\begin{aligned}
0&=(\Ric^\mathfrak n,\ad A)_{\tr}=(-c1_\mathfrak n+D,\ad A)_{\tr}=
-c\tr\ad A+(D,\ad A)_{\tr} \\
&=-c\scalar{H_0}{A}_\mathfrak s+(D,\ad A)_{\tr}
=-(\ad H_0,\ad A)_{\tr}+(D,\ad A)_{\tr}
\end{aligned}$$
(cf. \cite{Lau11:1}*{the proof of Proposition 4.2}). 
Since $(\cdot,\cdot)_{\tr}$ is nondegenerate on 
$\ad\mathfrak a_\mathfrak n$, we get 
$D=\ad H_0$ and hence 
$$\Ric^\mathfrak s=-c1_{\mathfrak s}.$$ 
Thus we obtain the following theorem, which is a particular case of  \cite{CR22:1}*{Theorem 4.1}. Also  see  \cite{Lau11:1}*{Theorem 4.8}. 
\begin{theorem} \label{th72}
Let $(\mathfrak s,\innerproduct_{\mathfrak s})$  be 
a pseudo-Iwasawa extension of a pseudo H-type Lie algebra $(\mathfrak n,\innerproduct)$. 
Then $(\mathfrak s,\innerproduct_{\mathfrak s})$ is Einstein with 
$\Ric^{\mathfrak s}=-c1_\mathfrak s$. 
\end{theorem} 
\begin{remark}
From the above results, we see that $\xi E=H_0$. 
Since $[\mathfrak a''_0,\mathfrak n_{-2}]=0$, 
for $A\in \mathfrak a''_0$ we get 
$$\tr(\ad A)=\scalar{A}{H_0}_\mathfrak s=\frac{\xi}{c}(\ad A,\ad E)_{\tr}
=-\frac{\xi}{c}\tr\ad A, $$
so $\tr\ad A=0$. 
\end{remark}
\begin{example} 
We consider the following SGLA $\gla g$ of the second kind. 
$$\begin{aligned} 
\mathfrak g&=\{~X\in\mathfrak{sl}(\ell+1,\mathbb C):
X^*S_{p,q}+S_{p,q}X=0~\}\quad  (\ell=2p+q-1,p\geqq 2),\\
\mathfrak g_{-2}
&=\{~Z=0_2\ominus 0_{\ell-3}\ominus Z_{31} :
Z_{31}\in \mathfrak{gl}(2,\mathbb C),Z_{31}^*=-K_2Z_{31}K_2~\}, \\
\mathfrak g_{-1}&=\{~X=
\begin{bmatrix} 
0 & 0 & 0 \\
X_{21} & 0 & 0 \\
0 & -K_2X_{21}^*S' & 0 \\
\end{bmatrix}:
X_{21}\in M(\ell-3,2;\mathbb C)~\}, \\
\mathfrak g_{0}& =\{~A=A_{11}\oplus A_{22}\oplus (-K_2A_{11}^*K_2)
:
A_{11}\in \mathfrak{gl}(2,\mathbb C),
A_{22}\in\mathfrak{gl}(\ell-3,\mathbb C),A_{22}^*S'+S'A_{22}=0~\}, \\
\mathfrak g_p &=\{~\transpose{X}\in\mathfrak g:X\in\mathfrak g_{-p}~\}
\quad (p=1,2),\quad \mathfrak g_p=0 \quad (|p|>2), 
\end{aligned} 
$$
where 
$$S'=K_{p-2} \ominus 1_q \ominus K_{p-2} .$$
For $B\in M(\ell-3,2,\mathbb C)$, 
we denote by $B^\natural$ the following $\ell\times\ell$ matrix: 
$$
B^\natural= 
\begin{bmatrix} 
0 & 0 & 0 \\
B & 0 & 0 \\
0 & -K_2B^*S' & 0 \\
\end{bmatrix}.$$
We define a nondegenerate symmetric bilinear form $\scalar{\cdot}{\cdot}_{-2}$ on $\mathfrak g_{-2}$ as follows: 
$$\scalar{Z}{W}_{-2}
=\frac{\eta_0}2(\tr (Z_{31})\tr(W_{31})-\tr (Z_{31}W_{31}))\quad 
(Z,W\in\mathfrak g_{-2},\eta_0=\pm1).$$
Here we assume one of the following two conditions: 
\begin{enumerate}
\renewcommand{\labelenumi}{(\roman{enumi}) }
\item $\eta_0=1$, $q=0$; 
\item $\eta_0=-1$, $p=2+2n$, $q=2m$. 
\end{enumerate}
We define a symmetric bilinear form 
$\scalar{\cdot}{\cdot}_{-1}$ by  
$$\scalar{X}{Y}_{-1}
=\eta_0\rea\tr(J_2\transpose{X_{21}}S_2Y_{21})
\quad 
(X,Y\in\mathfrak g_{-1})$$
and define a nonsingular linear operator $J_Z$ on $\mathfrak g_{-1}$ 
($Z\in\mathfrak g_{-2}$) as follows: 
$$J_Z(X)=(\transpose{S_1}\overline{X_{21}}1_{1,1}Z_{21})^{\natural}\qquad 
(X\in\mathfrak g_{-1}),$$
where 
$$\begin{aligned}
S_1&=1_{p-2,p-2}
, \quad 
S_2=J_{2p-4}\quad  (\text{if}\  \eta_0=1), \\ 
S_1&=(-1_{n,n}) \ominus(-I_{2m}) \ominus 1_{n,n}, \qquad 
S_2= J_{2n} \oplus I_{2m} \oplus(-J_{2n})
 \qquad (\text{if}\ \eta_0=-1). 
\end{aligned}$$
Note that $S'=S_1S_2$ and $S_2$ is a skewsymmetric matrix. 
Then the following conditions hold:
$$\scalar{J_Z(X)}{Y}_{-1}=\scalar{Z}{[X,Y]}_{-2},\quad 
J_Z^2=-\scalar{Z}{Z}_{-2}1_{\mathfrak g_{-1}}\quad 
(X,Y\in\mathfrak g_{-1},Z\in\mathfrak g_{-2}).$$
That is, $(\mathfrak n,\innerproduct)$ becomes a pseudo H-type Lie algebra, 
where $\mathfrak n=\mathfrak g_{-1}+\mathfrak g_{-2}$ and 
$\innerproduct=\scalar{\cdot}{\cdot}_{-1}+
\scalar{\cdot}{\cdot}_{-2}$. 
The characteristic element $E$ of $\gla g$ has the following form 
and the maximal $\mathbb R$-diagonalizable commutative subspace $\mathfrak a$ containing $E$ can be chosen as follows: 
$$E=\diag(1,1,0,\cdots,0,-1,-1),\quad 
\mathfrak a=\{~\diag(a_1,\dots,a_p,0\dots,0,-a_p,\dots,-a_1)\in\mathfrak g:
a_i\in\mathbb R~\}. $$
From \cite{Tam11:1}*{Proposition 3.8}, we see that $\dim\mathfrak a'=1$ and 
$\mathfrak a''=\mathbb RE$. 
Hence the solvmanifold $(\mathfrak s,\innerproduct_\mathfrak s)$ is Einstein in the both cases. 
\end{example}
\begin{example} 
We consider the following SGLA $\gla g$ of the second kind. 
$$\begin{aligned}
\mathfrak g&=\mathfrak{sl}(\ell+1,\mathbb R)\quad (\ell\geqq7,\ell=\text{odd}), \\
\mathfrak g_{-2}
&=\{~Z=0_2 \ominus 0_{\ell-3} \ominus Z_{31} :
Z_{31}\in \mathfrak{gl}(2,\mathbb R)~\}, \\
\mathfrak g_{-1}&=\{~X=
\begin{bmatrix} 
0 & 0 & 0 \\
X_{21} & 0 & 0 \\
0 & X_{32}  & 0 \\
\end{bmatrix}:
X_{21}\in M(\ell-3,2,\mathbb R), X_{32}\in M(2,\ell-3,\mathbb R)~\}, \\
\mathfrak g_{0}&=\{~A=A_{11}\oplus A_{22}\oplus A_{33}:
A_{11},A_{33}\in\mathfrak{gl}(2,\mathbb R),A_{22}\in\mathfrak{gl}(\ell-3,\mathbb R)~\}, \\
\mathfrak g_p &=\{~\transpose{X}\in\mathfrak g:X\in\mathfrak g_{-p}~\}
\quad (p=1,2),\quad \mathfrak g_p=0 \quad (|p|>2). 
\end{aligned}$$
We set $T=J_2\oplus J_{\ell-3}\oplus (-J_2)$ and define an involutive automorphism $\sigma$ of $\mathfrak g$ 
$$\sigma(X)=T(-\transpose{X})T^{-1}\quad X\in\mathfrak g.$$
Also we define a scalar product $\innerproduct$ on $\mathfrak g$ as follows: 
$$\scalar{X}{Y}=-\frac12\tr((\sigma(X)Y)\qquad X,Y\in\mathfrak g.$$
In particular for $Z\in\mathfrak g_{-2}$ we see that 
$\scalar{Z}{Z}=-\det(Z_{31})$. 
For $Z\in\mathfrak g_{-2}$ we define a linear operator $J_Z:\mathfrak g_{-1}\to\mathfrak g_{-1}$ by 
$$J_Z(X)=[Z,\sigma(X)]\quad X\in\mathfrak g_{-1}$$
We can prove easily 
$$\scalar{J_Z(X)}{Y}=\scalar{Z}{[X,Y]},\quad 
J_Z^2(X)=-\scalar{Z}{Z}X, $$
Thus $(\mathfrak n=\mathfrak g_{-2}\oplus\mathfrak g_{-1},\innerproduct_{\mathfrak n})$ becomes a pseudo H-type Lie algebra. 
The characteristic element of $\gla g$ is 
$$E=\diag(1,1,0,\dots,0,-1,-1).$$
We choose a maximal commutative $\mathbb R$-diagonalizable subspace $\mathfrak a$ of $\mathfrak g$ containing $E$ as follows: 
$$\mathfrak a=\{~\diag(a_1,\dots,a_{\ell+1})\in\mathfrak g:a_1,\dots,a_{\ell+1}\in\mathbb R~\}.$$
From \cite{Tam11:1}*{Proposition 3.8}, we see that $\dim\mathfrak a'=2$. 
We set 
$$L=\frac1{\ell+1}((\ell-3)1_2\oplus (-41_{\ell-3})\oplus(\ell-3)1_2);$$
then $L\in\mathfrak a''_0$ and $\mathfrak a_\mathfrak n=\mathbb RE\oplus\mathbb RL$. Hence $\dim \mathfrak a_\mathfrak n=2$. 
\end{example}
\subsection{Isometric group}
Let $(\mathfrak g,\innerproduct)$ be a metric Lie algebra 
and $(G,g)$ the pseudo-riemannian manifold attached to 
$(\mathfrak g,\innerproduct)$. 
Let $\Iso(G)$ denote the isometry group of $(G,g)$. 
The subgroup of isometries fixing the identity element denoted by $H$ 
is a closed subgroup of $\Iso(G)$ and one has 
$$\Iso(G)=G\cdot H.$$
We denote by $H^{\aut}$ the group of isometric automorphism of $G$, that is, 
$H^{\aut}=\Aut(G)\capprod H$ and set $\Iso^{\aut}(G)=G\cdot H^{\aut}$.  

Let $(\mathfrak n,\innerproduct)$ be a pseudo H-type Lie algebra and 
$(N,g)$ the pseudo-riemannian manifold attached to $(\mathfrak n,\innerproduct)$. 
The gradation of $\mathfrak n$ induces on the Lie group $N$ left invariant orthogonal distributions $\mathfrak n_{-1}N$ and $\mathfrak n_{-2}N$ such that 
$TN=\mathfrak n_{-1}N\oplus\mathfrak n_{-2}N$. 

We denote by $\Iso^{\Spl}(N)$ the group of isometries of $N$ that preserve the splitting $TN=\mathfrak n_{-1}N\oplus\mathfrak n_{-2}N$. 
Then 
$$\Iso^{\Spl}(N)=N\cdot H^{\Spl},$$
where $H^{\Spl}$ is the subgroup of isometries which preserve the splitting and fix the identity element of $N$. By \cite{BO13:1}*{Theorem 4.4},  one has 
$$\Iso(N)=\Iso^{\aut}(N)=\Iso^{\Spl}(N).$$
By Propositions \ref{prop41} and \ref{prop61}, the Lie algebra of $\Iso(N)$ 
is isomorphic to $\mathfrak n\oplus\mathfrak{so}(\mathfrak n_{-2},\innerproduct_{-2})\oplus\check{\mathfrak h}^a_0$.  

\begin{bibdiv}
\begin{biblist}
\bib{AC97:1}{article}{ 
author={D.~V.~Alekseevsky},
author={V.~Cortes}, 
title={Classification of $N$-(super)-extended Poincare algebras and bilinear invariants of the spinor representation of $\mathrm{Spin}(p,q)$}, 
date={1997},
journal={Comm. Math. Phys.}, 
volume={183}, 
pages={477\ndash 510} 
}

\bib{AS14:1}{article}{ 
author={A.~Altomani},
author={A.~Santi}, 
title={Tanaka structures modeled on extended Poincare algebras}, 
journal={Indiana Univ. Math. Journal},
volume={63}, 
date={2014}, 
pages={91\ndash 117}
}

\bib{AS14:2}{article}{ 
author={A.~Altomani}, 
author={A.~Santi}, 
title={Classification of maximal transitive prolongations of super-Poincare algebras}, 
journal={Adv. in Math.}, 
volume={265}, 
date={2014}, 
pages={60\ndash 96} 
}

\bib{BO13:1}{article}{
author={V.~del Barco}, 
author={G.~P.~Ovando}, 
title={Isometric actions on pseudo-Riemannian nilmanifolds}, 
journal={Ann. Global Anal. Geom.}, 
volume={45}, 
date={2013},
pages={95\ndash 110} 
}

\bib{Bou98:1}{book}{ 
author={N.~Bourbaki}, 
title={Algebras \upshape{I}, Chapters \upshape{1-3}}, 
publisher={Springer}, 
year={1998} 
}

\bib{Cia00:1}{article}{
author={P.~Ciatti}, 
title={Scalar Products on Clifford Modules and Pseudo-H-type Lie Algebras}, 
journal={Ann. di Matem. pura ed applicata (IV)}, 
volume={178}, 
date={2000}, 
pages={1\ndash 32} 
}

\bib{CR22:1}{article}{
author={D.~Conti},
author={P.~A.~Rossi},
title={Indefinite nilsoliton and Einstein solvmanifolds}, 
journal={J. Geom. Anal.}, 
volume={32.3}, 
date={2022}, 
pages={p.88} 
}

\bib{FM17:1}{article}{
author={K.~Furutani}, 
author={I.~Markina}, 
title={Complete classification of pseudo H-type Lie algebras: \upshape{I}}, 
journal={Geom. Dedicata}, 
volume={190}, 
date={2017}, 
pages={23\ndash 51}  
}

\bib{Gom96:1}{article}{
author={S.~Gomyo}, 
title={Realization of the exceptional simple graded Lie algebras of the second kind},  
journal={Algebras, groups and geometries}, 
volume={13}, 
date={1996}, 
pages={431\ndash 464}
}

\bib{Har90:1}{book}{
author={F.~R.~Harvey}, 
title={Spinors and Calibrations}, 
publisher={Academic Press}, 
year={1990} 
}

\bib{Kac68:1}{article}{
author={V.~G.~Kac}, 
title={Simple irreducible graded Lie algebras of finite growth}, 
language={in Russian},
journal={Izv. Akad. Nauk SSSR Ser. Mat.}, 
volume={32:6},
date={1968},
pages={1323\ndash 1367}, 
note={[Translation: Izv. Math., 2:6 (1968), 1271\ndash 1311]}
}

\bib{Kap80:1}{article}{
author={A.~Kaplan}, 
title={Fundamental solutions for a hypoelliptic PDE generated by composition of quadratic forms}, 
journal={Tran. Amer. Math. Soc.}, 
volume={258}, 
date={1980}, 
pages={147\ndash 153} 
}


\bib{KS16:1}{article}{
author={A.~ Kaplan}, 
author={M.~ Subils}, 
title={Parabolic nilradicals of Heisenberg type}, 
eprint={https://arxiv.org/pdf/1608.02663.pdf}
}

\bib{KS17:1}{article}{
author={A.~ Kaplan}, 
author={M.~ Subils}, 
title={Parabolic nilradicals of Heisenberg type, \upshape{II}}, 
eprint={https://arxiv.org/pdf/1708.08981.pdf}
}
\bib{Lau11:1}{article}{
author={J.~Lauret }, 
title={Ricci soliton solvmanifolds}, 
journal={J. reine und angew. Math.}, 
volume={650}, 
date={2011}, 
pages={1\ndash21} 
}

\bib{MKM18:1}{article}{
author={M.~G.~Molina}, 
author={B.~Kruglikov}, 
author={I.~Markina}, 
author={A.~ Vasil'ev}, 
title={Rigidity of 2-step Carnot groups}, 
journal={J. Geom. Anal.}, 
volume={28}, 
date={2018}, 
pages={1477\ndash 1501} 
}

\bib{Rya21:1}{article}{
author={J.~M.~Ryan},
title={Pseudo-Riemannian Lie groups of modified H-type},
year={2021},
eprint={https://doi.org/10.48550/arXiv.2108.01816}
}

\bib{Tam11:1}{article}{
author={H.~Tamaru}, 
title={Parabolic subgroups of semisimple Lie groups and Einstein solvmanifolds}, 
journal={Math Ann.}, 
volume={351}, 
date={2011}, 
pages={51\ndash 66} 
}

\bib{Tan70:1}{article}{
author={N.~Tanaka}, 
title={On differential systems, graded Lie algebras and pseudo-groups}, 
journal={J. Math. Kyoto Univ.}, 
volume={10}, 
date={1970}, 
pages={1\ndash 82} 
}

\bib{Yam93:1}{article}{
author={K.~Yamaguchi}, 
title={Differential systems associated with simple graded Lie algebras}, 
journal={Advanced Studies in Pure Math.},
volume={22}, 
date={1993}, 
pages={413\ndash 494} 
}

\bib{Yat18:1}{article}{
author={T.~Yatsui}, 
title={On conformal pseudo-subriemannian fundamental graded Lie algebras of semisimple type}, 
journal={Diff. Geom. and its Appl.}, 
volume={60}, 
date={2018}, 
pages={116\ndash131} 
}
\end{biblist}
\end{bibdiv}

\end{document}